\theoremstyle{plain}
\newtheorem{theorem}{Theorem}[section]
\newtheorem{proposition}[theorem]{Proposition}
\newtheorem*{proposition*}{Proposition}
\newtheorem*{theorem*}{Theorem}
\newtheorem{lemma}[theorem]{Lemma}
\newtheorem*{lemma*}{Lemma}
\newtheorem{conjecture}[theorem]{Conjecture}
\theoremstyle{definition}
\newtheorem{remark}[theorem]{Remark}
\numberwithin{equation}{section}  
\def\N{\mathbb{N}}
\def\R{\mathbb{R}}
\def\E{\mathbb{E}}
\def\P{\mathbb{P}} 
\def\infV{\underline{V}}
\newcommand{\ind}[1]{\mathbf{1}_{\left\{ #1 \right\}}}    
\newcommand*{\dif}{\ensuremath{\mathop{}\!\mathrm{d}}}
 \renewcommand{\bar}[1]{\mkern 1mu\overline{\mkern-1mu #1\mkern-1mu}\mkern 1mu}
\renewcommand{\hat}[1]{\widehat{#1}}
\title[Exponential moments of truncated BRW martingales]{Exponential moments of truncated branching random walk martingales}
\author[H. Ma]{Heng Ma} 
\address[Heng Ma]
{Peking University, School of Mathematical Sciences, China
}
\email{hengmamath at gmail dot com}
\urladdr{\url{https://hengmamath.github.io}}
\author[P. Maillard]{Pascal Maillard} 
\address[Pascal Maillard]{Institut de Mathématiques de Toulouse, CNRS UMR 5219, Université de Toulouse, 118 route de Narbonne, 31062 Toulouse Cedex 9, France and Institut Universitaire de France.}
\email{Pascal.Maillard at math dot univ-toulouse dot fr}
\urladdr{\url{https://www.math.univ-toulouse.fr/~pmaillar/}} 
\keywords{branching random walk; Malthusian martingale; derivative martingale}
\date{\today}
\begin{document}
 
\begin{abstract}
  For a branching random walk that drifts to infinity, consider its Malthusian martingale, i.e.~the additive martingale with parameter $\theta$ being the smallest root of the characteristic equation. When particles are killed below the origin, we show that the limit of this martingale admits an exponential tail, contrary to the case without killing, where the tail is polynomial. In the critical case, where the characteristic equation has a single root, the same holds for the (truncated) derivative martingale, as we show. This study is motivated by recent work on first passage percolation on Erd\H{o}s--R\'{e}nyi graphs. 
\end{abstract}

\maketitle


\section{Main Results}
 
Consider a branching random walk (BRW) generated by a point process $\Xi$ on $\mathbb{R}$. The process starts with a single particle (the root $\rho$) at the origin at generation zero. At each generation $n \geq 1$, every particle from generation $n-1$ dies and is replaced independently by a random number of offspring whose displacements from the parent follow the distribution $\Xi$. 
The resulting genealogical structure is a Bienaymé--Galton--Watson tree $\mathcal{T}$, where each individual may have an infinite number of offspring.
For each particle $u \in \mathcal{T}$, let $V(u)$ denote its position, $|u|$ its generation, and $[\rho, u]$ the set of ancestors of $u$, including $u$ itself. Set
\begin{equation}
    \label{eq:Biggins-transform}
    \Phi (\theta) :=  \ln \mathbb{E} \left[ \int_{\mathbb{R}} e^{-\theta x} \,  \Xi(\dif x)  \right] =\ln \E \left[ \sum_{|u|=1} e^{- \theta V (u)}  \right] \in (-\infty,\infty].
\end{equation}
Throughout this paper, we assume 
\begin{equation}\label{eq-Assumption}
\Phi(0)  \in (0,\infty] \ \text{ and } \  \Phi(1) = 0.
\end{equation}
Furthermore, for simplicity, we assume that $\Phi$ is finite in a neighborhood of $1$.
In particular, these conditions imply that $\inf_{|u|=n} V(u) \to \infty$ almost surely, see e.g. \cite[Theorem 3]{Biggins98}.

Define
  \begin{equation}
    W_n := \sum_{|u|=n} e^{-V(u)}.
  \end{equation}
This forms a non-negative martingale known as the additive or Biggins’ martingale \cite{Biggins77}. If $\Phi '(1) = \E[ V(u) e^{-V(u)} ] < 0$, then $1$ is the smallest root of the \emph{characteristic equation} $\Phi(\theta) = 0$, by (strict) convexity of $\Phi$. The martingale $W_n$ is then sometimes called the \emph{Malthusian martingale} of the branching random walk. Under an $L \ln L$ condition, Biggins \cite{Biggins77} showed that the limit $W_\infty := \lim_{n \to \infty} W_n$ of the Malthusian martingale is non-degenerate.
In the so-called boundary case (terminology according to Biggins and Kyprianou \cite{BK05}) where $\Phi'(1) = 0$, the additive martingale vanishes almost surely, and one must instead consider the derivative martingale 
\begin{equation}
    D_{n} := \sum_{|u|=n} V(u) e^{-V(u)}  ,
\end{equation}
which converges almost surely to a positive limit $D_\infty$ on the survival set (see \cite{BK04} and \cite{Chen15})  under mild moment assumptions. 
The existence and positivity of $D_\infty$ is itself non‑trivial,  as $(D_{n})$ is signed martingale with $\E[D_{n}]=0$ and  $\sup_{n \geq 1} \E[ |D_{n}|]=\infty$.

In this article, we study the truncated additive martingale and the truncated derivative martingale, defined for $n\geq 0$ and $x \geq 0$ by 
\begin{align}
\label{eq:def_truncated_martingale}
 W_{n}^{(x)} &:= \sum_{|u|=n} e^{- ( x+V(u) ) } \ind{ x+ \infV(u) \geq 0 },\\
 \label{eq:def_truncated_derivative_martingale}
  D_{n}^{(x)} &:= \sum_{|u|=n} R(x+ V(u)) e^{- ( x+V(u) ) } \ind{ x+ \infV(u) \geq 0 }, 
\end{align}
where $  \infV(u) := \min \left\{  V(v):  v \in [\rho, u] \right\}$  and $R (\cdot ) \geq 0$ is the renewal function of the associated random walk (see \S \ref{sec-notation} for the precise definition).  Definitions~\eqref{eq:def_truncated_martingale} and \eqref{eq:def_truncated_derivative_martingale} can be interpreted as the effect of killing particles upon hitting the negative half-line.
For each $x\ge0$, the sequences $(W_n^{(x)})_{n\ge 0}$ and $(D_n^{(x)})_{n\ge 0}$ form a non-negative supermartingale and a martingale, respectively. Consequently, they converge almost surely to random limits $W_\infty^{(x)}$ and $D_\infty^{(x)}$:
\begin{equation}
  W_{n}^{(x)} \xrightarrow[n \to \infty]{a.s.} W_{\infty}^{(x)} \ , \   D_{n}^{(x)} \xrightarrow[n \to \infty]{a.s.} D_{\infty}^{(x)} .
\end{equation}
Moreover since 
  $\inf_{|u|=n}V(u)\to\infty$ a.s., we have 
$e^{x} W_{\infty}^{(x)} \xrightarrow[x \to \infty]{a.s.}  W_{\infty} $ and $e^{x} D_{n}^{(x)} \xrightarrow[n \to \infty]{a.s.} D^{(x)}   $.

Note that $D_{n}^{(x)}$ is well-known as a key tool to study the (untruncated) derivative martingale, see e.g. \cite[\S 5.2]{Zhan15}. Indeed, the existence of $D_\infty$ follows from the facts that $D_n^{(x)} \to D_\infty^{(x)}$ in $L^1$, $\inf_{|u|=n} V(u) \to \infty$ almost surely, and $R(y) \sim c y$ as $y \to \infty$ for some $c > 0$. This naturally raises the question of whether the truncated martingales $(D_n^{(x)})$ possess stronger properties than uniform integrability.  

Our first theorem concerns the tail of the truncated additive martingale:

\begin{theorem}[Tail of truncated additive martingale]
\label{thm-tail-W-x} 
Suppose $\Phi'(1)<0$ and set 
\begin{equation}
  \kappa := \inf\bigl\{\theta>1:\Phi(\theta)\ge0\bigr\}\in(1,\infty].
\end{equation} 
Let $\gamma\in (1,\kappa)$. If $W_1 = \sum_{|u|=1} e^{-V(u)}$  admits a finite exponential moment, then there are constants $C,c>0$ such that
\begin{equation}\label{tail-W-x}
  \P(W^{(x)}_{n} > y )  \leq  C  e^{ -\gamma x} e^{- c y}    
\end{equation}  
for all $x>0$, $y\ge1$, and $n\in\mathbb{N}\cup\{\infty\}$. 
\end{theorem}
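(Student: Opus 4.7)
My plan is to reduce the probability bound to a mixed moment estimate, and then establish that estimate via Hölder's inequality using two ingredients: a polynomial moment bound and a uniform exponential moment bound for the truncated martingale.

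For the reduction, observe that $w \mapsto w^\gamma e^{\lambda w}$ is nondecreasing on $[0,\infty)$, so Markov's inequality gives, for $y \geq 1$ and any $\lambda > 0$,
\[
\P(W_n^{(x)} > y) \leq \frac{\E[(W_n^{(x)})^\gamma e^{\lambda W_n^{(x)}}]}{y^\gamma e^{\lambda y}} \leq e^{-\lambda y}\,\E[(W_n^{(x)})^\gamma e^{\lambda W_n^{(x)}}].
\]
Choose $p \in (1, \kappa/\gamma)$, possible since $\gamma \in (1,\kappa)$, and let $q = p/(p-1)$. Hölder yields
\[
\E[(W_n^{(x)})^\gamma e^{\lambda W_n^{(x)}}] \leq \E[(W_n^{(x)})^{p\gamma}]^{1/p}\,\E[e^{q\lambda W_n^{(x)}}]^{1/q}.
\]
The first factor is controlled by the pointwise domination $W_n^{(x)} \leq e^{-x} W_n$ together with the $L^{p\gamma}$-boundedness of the (non-killed) Biggins martingale: $\sup_n \E[W_n^{p\gamma}] < \infty$ by Biggins' theorem, since $p\gamma \in (1,\kappa)$ and the finite exponential moment of $W_1$ gives $\E[W_1^{p\gamma}] < \infty$. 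This contributes the desired $e^{-\gamma x}$ decay, and the theorem reduces to proving $\sup_{n,\,x \geq 0}\E[e^{\mu W_n^{(x)}}] < \infty$ for some $\mu > 0$.

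For this uniform exponential moment bound I would induct on $n$ using the branching decomposition
\[
W_{n+1}^{(x)} = \sum_{|u|=1,\,V(u)+x \geq 0} e^{-(V(u)+x)}\, \widehat W_n^{(x+V(u))}(u),
\]
with independent copies $\widehat W_n^{(\cdot)}(u)$. The natural quantitative hypothesis is $\E[e^{\mu W_n^{(y)}}] - 1 \leq K_n\,\mu\, e^{-\alpha y}$ for some fixed $\alpha \in (0, \kappa-1)$ and a bounded sequence $K_n$. Conditioning on the first generation, applying the hypothesis to each child factor $\E[e^{\mu a_u \widehat W_n^{(x+V(u))}}]$ where $a_u = e^{-(x+V(u))} \leq 1$, and using $\prod(1+z_i) \leq \exp(\sum z_i)$, one obtains
\[
\E[e^{\mu W_{n+1}^{(x)}} \mid \mathcal{F}_1] \leq \exp\!\Big( K_n\,\mu\, e^{-(1+\alpha)x}\sum_{V(u)\geq -x} e^{-(1+\alpha)V(u)} \Big).
\]
A two-term Taylor expansion of this exponential, combined with $\E[\sum_{V\geq -x} e^{-(1+\alpha)V}] \leq e^{\Phi(1+\alpha)}$ for the linear term and the pointwise bound $\sum_{V \geq -x} e^{-(1+\alpha)V(u)} \leq e^{\alpha x} W_1$ for the quadratic correction, gives a recursion of the form $K_{n+1} \leq K_n\, e^{\Phi(1+\alpha)} + c\,\mu\,K_n^2$ with $c$ involving $\E[W_1^2 e^{\mu_0 W_1}] < \infty$. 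Since $\Phi(1+\alpha) < 0$, this recursion has a bounded attractive fixed point for $\mu$ small enough, and the base case $F_0(y,\mu) - 1 \leq e\,\mu\,e^{-y}$ lies in its basin, closing the induction.

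The main obstacle is precisely this contraction estimate: one must simultaneously exploit the strict inequality $\Phi(1+\alpha) < 0$ (which provides contraction of the linear part of the recursion, and is where the hypothesis $\gamma < \kappa$ enters) and the exponential moment of $W_1$ (which controls the higher-order corrections arising from the multiplicativity of the branching). A naive uniform hypothesis of the form $\E[e^{\mu W_n^{(y)}}] \leq M$ does not propagate, because the product over the first-generation children is sensitive to the number of particles $|I_x|$ which may fail to have an exponential moment under our assumptions; the quantitative $e^{-\alpha y}$ factor in the inductive hypothesis is what converts a branching-type product into a single sum controlled by $W_1$.
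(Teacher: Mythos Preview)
Your H\"older reduction in the first paragraph is correct and gives a cleaner path to the $e^{-\gamma x}$ factor than the paper's proof: the paper first establishes a uniform exponential-moment bound (its Lemma~2.1), then inserts the spatial decay by splitting into cases on $y$ versus $x$ and invoking a separate tail estimate $\P(x+\inf_{|u|\le n}V(u)\le z)\le e^{-\kappa(x-z)}$ for the global minimum. Your route via $W_n^{(x)}\le e^{-x}W_n$ and the $L^{p\gamma}$-boundedness of the untruncated martingale for $p\gamma<\kappa$ avoids that detour entirely.

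The induction for the exponential moment, however, has a genuine gap. The branching decomposition does \emph{not} carry the weight $e^{-(x+V(u))}$ you wrote: since $W_n^{(y)}$ already contains the factor $e^{-y}$, the correct identity is
\[
W_{n+1}^{(x)}\;=\;\sum_{|u|=1,\ x+V(u)\ge0}\widetilde W_n^{(x+V(u))}(u),
\]
with each $\widetilde W_n^{(y)}(u)$ distributed as $W_n^{(y)}$ and \emph{no} prefactor $a_u$. Consequently the child factors are $\E[e^{\mu W_n^{(x+V(u))}}]$ at the same parameter $\mu$, and your hypothesis gives
\[
\E\bigl[e^{\mu W_{n+1}^{(x)}}\,\big|\,\mathcal F_1\bigr]\;\le\;\exp\Bigl(K_n\mu\,e^{-\alpha x}\!\!\sum_{V(u)\ge -x}\!e^{-\alpha V(u)}\Bigr),
\]
with exponent $\alpha$, not $1+\alpha$. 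To make the linear term contract you would need $e^{\Phi(\alpha)}<1$, i.e.\ $\alpha\in(1,\kappa)$; but then the base case $F_0(\mu,y)-1=e^{\mu e^{-y}}-1\sim\mu e^{-y}$ cannot be dominated by $K_0\mu e^{-\alpha y}$. Conversely, for $\alpha\le1$ the base case is fine, but convexity together with $\Phi'(1)<0$ forces $\Phi(\alpha)\ge|\Phi'(1)|(1-\alpha)>0$, so $e^{\Phi(\alpha)}>1$ and there is no contraction. The single-term hypothesis thus cannot be propagated. The paper closes the induction with R\"osler's two-term ansatz $\ln\E[e^{\theta W_n^{(x)}}]\le\theta e^{-x}+K\theta^{\rho}e^{-\rho x}$ for $\rho\in(1,\kappa\wedge2]$, which separates the non-contracting mean part (decay $e^{-x}$) from a higher-order correction that \emph{does} contract because $\Phi(\rho)<0$; this two-scale structure is exactly what your scheme is missing.
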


Our second theorem concerns the tail probability of $D_{\infty}^{(x)}$. It yields that if both $W_{1}$ and the ``total variation'' of $D_{1}$ have exponential moment, then so does $D_{\infty}^{(x)}$.  
 
\begin{theorem}[Tail of truncated derivative martingale]\label{thm-tail-D-x}
Suppose $\Phi '(1) =0$. Define 
\begin{equation}
  X= \sum_{|u|=1} [1+ (V(u))_+] e^{- V(u)} . 
\end{equation}  
If the random variable $X$ admits a finite exponential moment, then  there exists constants $C,c>0$   such that 
\begin{equation}\label{tail-D-x}
    \P(D^{(x)}_{n} > y )  \leq  C \,  R(x)e^{-x} \, e^{-c y} 
\end{equation}   
for all $ x >0 $, $y \ge 1$  and $ n \in \mathbb{N} \cup \{ \infty \}$. 
\end{theorem}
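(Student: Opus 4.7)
My plan is to use the Biggins--Kyprianou spinal change of measure associated with the truncated derivative martingale itself, and then reduce the tail estimate to a uniform exponential moment bound under the new measure. Concretely, set $M_n^{(x)} := D_n^{(x)}/(R(x)e^{-x})$, a nonnegative mean-one martingale, and define $\mathbb{Q}^{(x)}$ by $\mathrm{d}\mathbb{Q}^{(x)}/\mathrm{d}\mathbb{P}|_{\mathcal{F}_n} = M_n^{(x)}$. For any $\mathcal{F}_n$-measurable event $A \subseteq \{D_n^{(x)} > 0\}$ one has the duality $\mathbb{P}(A) = R(x)e^{-x}\,\mathbb{E}^{\mathbb{Q}^{(x)}}[\mathbf{1}_A/D_n^{(x)}]$. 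Applied to $A = \{D_n^{(x)} > y\}$, on which $1/D_n^{(x)} < 1/y$, this yields
\[
  \mathbb{P}(D_n^{(x)} > y) \;\le\; \frac{R(x)e^{-x}}{y}\,\mathbb{Q}^{(x)}(D_n^{(x)} > y).
\]
The conclusion \eqref{tail-D-x} will therefore follow if I can establish a bound $\mathbb{Q}^{(x)}(D_n^{(x)} > y) \le C\,e^{-cy}$ uniformly in $n, x$, which in turn reduces (by Chernoff) to the uniform exponential moment bound $\mathbb{E}^{\mathbb{Q}^{(x)}}[e^{\lambda D_n^{(x)}}] \le C$, or equivalently $\mathbb{E}[D_n^{(x)}\,e^{\lambda D_n^{(x)}}] \le C\,R(x)e^{-x}$, for some $\lambda>0$.

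\textbf{Steps.} Under $\mathbb{Q}^{(x)}$ there is a distinguished spine $\xi=(\xi_k)$ along which $w_k := x+V(\xi_k)$ performs the Doob $R$-transform of the associated random walk (so $w_k \ge 0$ and $w_0=x$), the offspring at each spine vertex is born from a law size-biased by $D_1^{(w_k)}/R(w_k)e^{-w_k}$, and off-spine subtrees evolve as iid BRWs under $\mathbb{P}$. This gives the spinal decomposition
\[
  D_n^{(x)} \;=\; R(w_n)e^{-w_n} + \sum_{k=0}^{n-1}\sum_{v\in\mathcal{N}_k} D_{n-k-1}^{(w_k+\Delta_v)}(v),
\]
where $\mathcal{N}_k$ denotes the non-spine children of $\xi_k$ and $\Delta_v$ their displacements. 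The spine term is uniformly bounded since $\sup_{z\ge 0}R(z)e^{-z} < \infty$. Expanding $\mathbb{E}[D_n^{(x)} e^{\lambda D_n^{(x)}}]$ as $\sum_k \lambda^k\mathbb{E}[(D_n^{(x)})^{k+1}]/k!$, the target reduces to uniform polynomial moment bounds of the form $\mathbb{E}[(D_n^{(x)})^p] \le C^p\,p!\,R(x)e^{-x}$ for all integers $p \ge 1$. The case $p=1$ is exactly the martingale identity. For the inductive step $p \Rightarrow p+1$, I would use the identity $\mathbb{E}[(D_n^{(x)})^{p+1}] = R(x)e^{-x}\,\mathbb{E}^{\mathbb{Q}^{(x)}}[(D_n^{(x)})^p]$ and expand $(D_n^{(x)})^p$ under $\mathbb{Q}^{(x)}$ via the spinal decomposition into a multi-sum indexed by ordered $p$-tuples of ancestors of off-spine particles. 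Using the pointwise bound $D_1^{(z)} \le C_0 X$ (obtained from $R(y)\le c(1+y)$ by splitting the sum defining $D_1^{(z)}$ according to the sign of $V(u)$) and the exponential integrability of $X$, each factor in the multi-sum can be estimated, and summability along the $R$-biased spine (which follows from the transience of $(w_k)$ and the fast decay of $R(w_k)e^{-w_k}$) closes the induction with the right factorial dependence in $p$.

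\textbf{Main obstacle.} The key technical difficulty is obtaining the moment bounds $\mathbb{E}[(D_n^{(x)})^p] \le C^p p!\,R(x)e^{-x}$ with the correct scaling in both $x$ (linear in $R(x)e^{-x}$) and $p$ (factorial). The linear-in-$R(x)e^{-x}$ scaling for $p=1$ comes from the martingale identity with no room to spare, so for $p\ge 2$ this scaling is not automatic from polynomial moments of $X$: it requires quantifying the fact that for large $x$, the random variable $D_n^{(x)}$ concentrates near zero, because a non-negligible contribution to $D_n^{(x)}$ forces some ancestor to have displacement $\approx -x$, which via $e^{-V(u)} \gtrsim e^x$ is a super-exponentially rare event under the hypothesis $\mathbb{E}[e^{\mu X}]<\infty$. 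Propagating this concentration through the spinal multi-sum uniformly in $n$ (and controlling the combinatorial factor $p!$ that arises from the number of ordered tuples) is where the full exponential integrability of $X$, rather than a mere polynomial moment, is essential. Once the moment bounds are established with the stated dependence on $p$, summing the series yields the desired weighted exponential moment, and the change-of-measure identity above delivers \eqref{tail-D-x}.
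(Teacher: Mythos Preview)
Your high-level strategy coincides with the paper's: pass to the size-biased measure associated with $D_n^{(x)}$, decompose along the spine (which under the new measure follows the $R$-conditioned walk), and reduce the tail bound to showing $\sup_{x,n}\hat{\E}_x^{*}[e^{\lambda D_n^{(x)}}]<\infty$ for some $\lambda>0$. The difference is in how you propose to establish this uniform exponential moment: the paper argues by induction on $n$ directly on the moment generating function, whereas you propose induction on the moment order $p$.

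The gap in your plan is the sentence ``summability along the $R$-biased spine (which follows from the transience of $(w_k)$ and the fast decay of $R(w_k)e^{-w_k}$) closes the induction with the right factorial dependence in $p$.'' If you carry out your moment expansion, the dominant term in $\hat{\E}_x^{*}[(D_n^{(x)})^p]$ is essentially $\hat{\E}_x^{*}[A_n^p]$, where $A_n=\sum_{k<n} R(w_k)e^{-w_k}\,\Delta_{k+1}$ is the spine ``perpetuity'' with $\Delta_{k+1}$ the size-biased first-generation contribution. Getting $\hat{\E}_x^{*}[A_n^p]\le C^p p!$ for all $p$ is exactly the statement that $A_\infty$ has a uniform exponential moment, and this does \emph{not} follow from transience alone: under the $R$-transform the spine grows only like $\sqrt{n}$, and the number of visits to a strip $[j,j+1]$ is geometric with failure probability $R(j+1)/R(j+2)=1-\Theta(1/j)$, so the local time at low levels has heavy tails that must be balanced against the weight $R(j)e^{-j}$. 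The paper isolates this as a separate lemma and proves it by an excursion decomposition: bound the number of excursions to each strip geometrically, control the contribution per excursion via the exponential moment of $X$, and then sum over strips with a level-dependent choice of $\lambda_j$. This is the technical heart of the argument, and your proposal does not address it.

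A secondary point: the ``main obstacle'' you identify --- the linear-in-$R(x)e^{-x}$ scaling of the moments --- is in fact automatic from the very change-of-measure identity you wrote down, $\E[(D_n^{(x)})^{p+1}]=R(x)e^{-x}\,\hat{\E}_x^{*}[(D_n^{(x)})^p]$; no large-deviation heuristic about displacements $\approx -x$ is needed. The real obstacle is the factorial dependence on $p$, equivalently the exponential tail of the perpetuity, and that is where the work lies.
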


\subsection{Background and related work}
Tail probabilities for both additive and derivative martingales in branching random walks have been extensively studied \cite{Guivarc'h90, Liu00, Buraczewski09, Madaule16b, BIM21,CDM24}. 
Regarding the additive martingale, Liu \cite{Liu00} proved that if  $\Phi(\kappa) = 0$ (with $\kappa\in (1,\infty)$) and certain moment conditions hold then 
 \begin{equation}
  \P( W_{\infty} > y) \sim \frac{c}{ y^{\kappa}}  \text{ as } y \to \infty.
 \end{equation}
Moreover, under mild moment conditions, Buraczewski, Iksanov, and Mallein \cite{BIM21} showed that the derivative martingale limit \(D_\infty\) exhibits Cauchy-type tails: 
 \begin{equation}
  \P( D_{\infty} > y) \sim  \frac{1}{y} \text{ and  } \, \E  \left[ D_{\infty} \ind{ D_{\infty} \leq y}  \right] = \ln y + c + o(1) \text{ as } y \to \infty.
 \end{equation} 
 Although truncation is the standard technique to establish convergence to a non-degenerate limit, existing literature has only shown uniform integrability of the truncated derivative martingale. 
 A natural question is then: what is the tail behavior of this truncated limit? Our Theorem \ref{thm-tail-D-x} answers this by demonstrating that its tail actually decays exponentially fast. This is maybe a surprise at first glance.   
   
  In the case where $\Xi$ is supported on $\mathbb{R}_+$ where $\kappa =\infty$,  R\"{o}sler \cite{Rosler95} showed that a finite exponential moment  of $W_{1}$ implies a finite exponential moment of $W_{\infty}$. It is natural to ask whether, when negative displacements occur, the truncation (or killing at negative half-line) can restore an exponential tail. We answer this question in the affirmative.

Our initial motivation comes from first-passage percolation on Erd\H{o}s--R\'{e}nyi graphs \cite{DalySchulteShneerFirstPassage2023}. 
In the case of non-negative weights, the authors analyze the extreme value statistics of shortest paths, using moment calculations. 
A crucial step is to show that the additive martingale in a certain branching random walk with non-negative jumps (Crump-Mode-Jagers process) has exponential tail, which they prove using R\"{o}sler's technique \cite{Rosler95}. 
If one allows for negative weights, then the local weak limit of the graph becomes a branching random walk with positive or negative jumps, in which case the additive martingale has a polynomial tail, so that the method of moments cannot be used. 
However, the results of the current article indicate that one can salvage this by a suitable truncation. This is left for future work.

In \cite{BerestyckiBrunetHarrisMilosBranchingBrownian2017}, the authors study the number of particles absorbed at the origin in branching Brownian motion in the case where the minimum has non-negative speed. They also show that the number of particles has an exponential tail, both in the case of positive speed and of zero speed. Their methods are analytic and rely on a detailed study of an associated differential equation, which does not adapt to the branching random walk. On the other hand, they obtain indeed a true asymptotic equivalent for the tail.
We expect that  the tail of $W^{(0)}_{\infty}$ should be related to the tail of the number of absorbed particles, since for either of these quantities, the strategy to make them large should be to let a few number of particles stay close to the origin for a long time. However, we leave a precise study of the large deviation event for future study (see also Section~\ref{sec:outlook}).

  On the methodological side, it is interesting to see that  R\"{o}sler's arguments can be extended to the subcritical case but seem to fail in the critical case (boundary case), whereas spine decompositions handle both settings (see Section~\ref{sec:proof_ideas} below). It is also maybe the first example of the use of spine techniques for showing that additive functionals have exponential tails. This could be of potential use in other contexts.
  
\subsection{Proof ideas}
\label{sec:proof_ideas}

We provide two proofs for Theorem~\ref{thm-tail-W-x}. The first one is an adaptation of R\"{o}sler's \cite{Rosler95} proof in the case of non-negative increments, where no truncation is necessary. R\"{o}sler's idea was to obtain a bound on the moment generating function of $W_n$ by induction on $n$. He was able to obtain such a bound of the form
\[
\E[e^{\theta W_n}] \le \exp(\theta + K \theta^2),\quad 0\le \theta \le \bar\theta,
\]
for some $\bar\theta > 0$ and $K<\infty$ independent of $n$. In our case, we get an extra dependence on the starting point $x$, but we can still show that one can obtain a similar bound (under a slightly stronger assumption than the one stated in Theorem~\ref{thm-tail-W-x}): for any $\rho\in (1,\kappa\wedge 2)$, there is $\bar\theta>0$ and $K<\infty$, such that
\[
\E[e^{\theta W_n^{(x)}}] \le \exp(\theta e^{-x} + K \theta^{\rho}e^{-\rho x}),\quad 0\le \theta\le \bar\theta,\ x\ge 0.
\]
This bound directly shows that $W_n^{(x)}$ admits a finite exponential moment (independent of $n$), but does not allow to obtain the dependence on $x$ of the tail. We obtain this from a tail bound for the total minimum of the (unkilled) branching random walk, using essentially that the branching random walk has to send a particle close to the origin in order for $W^{(x)}_n$ to be large. We refer to Section~\ref{sec:additive_martingales} for details. 

For the proof of Theorem~\ref{thm-tail-D-x}, we were not able to extend the above proof (note that we have $\kappa = 1$ in this case). We therefore establish a completely different proof, using a spine decomposition techniques. Again, we aim to bound the moment generating function
\[
f_n(\theta,x) = \E[e^{\theta D_n^{(x)}}].
\]
We observe that its derivative in $\theta$ has the expression
\[
f_n'(\theta,x) = \E[D_n^{(x)}e^{\theta D_n^{(x)}}].
\]
We now apply a well-known \emph{spine decomposition} in order to express the above quantity as
\[
f_n'(\theta,x) = R(x)e^{-x} \widehat{\E}_x^*[e^{\theta D_n^{(x)}}],
\]
with $\widehat{\E}_x^*$ the expectation with respect to a certain branching random walk with a distinguished spine following a centered random walk with finite variance, conditioned to stay non-negative. The quantity $D_n^{(x)}$ being an additive functional of the branching random walk, we can decompose it along the particles on the spine. Showing finiteness of the exponential moment of $D_n^{(x)}$ under this law then, roughly speaking, amounts to studying finiteness of a certain randomized additive functional of the trajectory of the spine, see Lemma~\ref{lem-CondRW}. This functional this related to so-called  \emph{perpetuities}, see e.g. the comprehensive monograph by Iksanov~\cite{IksanovRenewalTheory2016}, or \cite{AIR09}. However, as opposed to classical perpetuities, the underlying random walk here is not a random walk with a drift, but a centered random walk conditioned to stay non-negative. We prove in Lemma~\ref{lem-CondRW} that this particular perpetuity has an exponential tail. Its proof works by decomposing the conditioned random walk into excursions and using the fact that the number of excursions from a given set can be bounded by a geometric random variable with a certain parameter. We refer to Section~\ref{sec:proof_derivative_martingale} for details.

The proof of Theorem~\ref{thm-tail-D-x} can be generalized to provide an independent proof of Theorem~\ref{thm-tail-W-x}, under the assumption stated in the theorem. We provide a sketch of the argument in Remark~\ref{rmk-spine-argu-to-W}.

\subsection{Outlook and open problems}
\label{sec:outlook} 

It would be very interesting to obtain more precise tail asymptotics for the truncated martingales, such as an asymptotic equivalent for the tails $\P(W_\infty^{(x)} > y)$ and $\P(D_\infty^{(x)} > y)$ as $x$ and $y$ go to infinity. Concerning the dependence on $x$, we conjecture that it is fairly simple and should basically amount to the probability of the existence of a particle getting close to the origin starting from $x$. In other words, we expect the following to hold under mild assumptions:
\begin{conjecture}
For some $K\in (0,\infty)$,
\[
\P(W_\infty^{(x)} > y) \sim Ke^{-\kappa x} \P(W_\infty^{(0)} > y),\quad x,y\to\infty,
\]
and similarly, for some $K\in (0,\infty)$,
\[
\P(D_\infty^{(x)} > y) \sim Ke^{-x} \P(D_\infty^{(0)} > y).
\]
\end{conjecture}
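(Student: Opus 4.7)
The plan is to exploit a one-big-jump heuristic: for $W_\infty^{(x)}$ to be anomalously large, the most efficient strategy is for a single lineage to descend into a bounded neighborhood of the origin, whose descendant subtree then produces most of the mass. Fix $M>0$ and let $\mathcal{L}_x^M$ denote the stopping line of particles $u$ such that $x+V(u) \in [0,M]$ and $x+V(v) > M$ for every strict ancestor $v$ of $u$. By the branching property one has the decomposition $W_\infty^{(x)} = R_x + \sum_{u \in \mathcal{L}_x^M} \widetilde{W}_u$, where, conditionally on $\mathcal{L}_x^M$ and the overshoots $\xi_u := x+V(u)\in[0,M]$, each $\widetilde{W}_u$ is an independent copy of $W_\infty^{(\xi_u)}$, and $R_x$ collects the contributions of lineages that remain strictly above $M$ starting from $x$. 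An entirely analogous decomposition applies to $D_\infty^{(x)}$ in the boundary case.

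From this decomposition, the lower bound in the conjecture follows by restricting to the event that $\mathcal{L}_x^M$ contains a single point with overshoot in a fixed interval: this event has probability $\sim C(M)e^{-\kappa x}$ (resp.~$\sim C(M)R(x)e^{-x}$) with the overshoot weakly converging to a limit law $\nu_M$ on $[0,M]$, and the spawned subtree has truncated martingale distributed as $W_\infty^{(\xi)}$. Upgrading this sketch to a matching upper bound requires three inputs. First, sharp first- and second-moment asymptotics for $\mathcal{L}_x^M$; these are classical harmonic/renewal computations along a spine, driven by the additive martingale with parameter $\kappa$ (resp.~the harmonic function $R$ in the boundary case). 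Second, control of the remainder: one needs $\P(R_x > y) = o(e^{-\kappa x}\, \P(W_\infty^{(0)} > y))$ uniformly in $y\ge 1$, which should be accessible by applying Theorem~\ref{thm-tail-W-x} to the BRW killed upon entering $(-\infty,M]$ instead of $(-\infty,0)$ and then taking $M$ large. Third, a regularity statement: for each fixed $\xi \ge 0$, the ratio $\P(W_\infty^{(\xi)} > y)/\P(W_\infty^{(0)} > y)$ converges to some $h(\xi)\in(0,\infty)$ as $y \to \infty$. Granted these three inputs, averaging the decomposition against the entry law gives the conjectured equivalent with $K = \int h(\xi)\, \nu(d\xi)$, where $\nu = \lim_M \nu_M$ (with the value stabilising for $M$ large thanks to the remainder estimate).

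The main obstacle is unambiguously the third input. Theorem~\ref{thm-tail-W-x} only gives $\P(W_\infty^{(0)} > y) \le C e^{-cy}$ up to multiplicative constants; it says nothing about the precise logarithmic shape of the tail nor about its regularity, and a priori the tail could oscillate on a log-scale. Extracting $h(\xi)$ should require exploiting the distributional fixed-point equation $W_\infty^{(x)} \stackrel{d}{=} \sum_{|u|=1} \widetilde{W}_{\infty,u}^{(x+V(u))} \ind{x+V(u) \ge 0}$ and running an implicit-renewal, Goldie-style argument adapted to a smoothing transformation with truncation in place of the classical contractive perpetuity setup. In the boundary case relevant to $D_\infty^{(x)}$ the situation is strictly harder: the analogous renewal theorem must be developed for a centered random walk conditioned to stay non-negative, which would require upgrading Lemma~\ref{lem-CondRW} from an exponential tail bound to a precise asymptotic equivalent. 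Matching the explicit constant $K$ and obtaining uniformity in the joint regime $x,y\to\infty$ (rather than iterated limits) are secondary but still delicate issues, since they require matching fluctuations of the entry line against fluctuations of the subtree tail.
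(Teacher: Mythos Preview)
This statement is a \emph{Conjecture} in the paper (Section~\ref{sec:outlook}); the authors do not attempt a proof and list it among their open problems, so there is no argument in the paper to compare against. Your one-big-jump heuristic coincides with the paper's own intuition: they write that the $x$-dependence ``should basically amount to the probability of the existence of a particle getting close to the origin starting from $x$'', which is exactly what your stopping-line decomposition via $\mathcal{L}_x^M$ formalises. You also correctly isolate the principal obstruction as the regularity of $\P(W_\infty^{(\xi)}>y)/\P(W_\infty^{(0)}>y)$, for which the paper offers no tools beyond the crude bound of Theorem~\ref{thm-tail-W-x}.

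Two remarks on the outline. First, the paper explicitly warns that the single-spine picture underpinning a Goldie-type analysis may be wrong: the authors suggest the optimal large-deviation strategy for $\{W_\infty^{(0)}>y\}$ could involve a \emph{skeleton} of several particles near the origin rather than one, in which case an implicit-renewal linearisation around a single dominant branch would not capture the leading tail and your route to $h(\xi)$ would need rethinking. Second, your control of the remainder $R_x$ via Theorem~\ref{thm-tail-W-x} is not sufficient as stated. Shifting the barrier shows $R_x = e^{-M} W_\infty^{(x-M)}$, so Theorem~\ref{thm-tail-W-x} gives $\P(R_x>y)\le C e^{-\gamma(x-M)} e^{-c e^{M} y}$ with $\gamma<\kappa$; the $x$-exponent is strictly worse than $e^{-\kappa x}$, and no choice of $M$ repairs this. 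Closing the remainder therefore requires either a sharpening of the $x$-exponent to $\kappa$ or a quantitative lower bound on $\P(W_\infty^{(0)}>y)$ strong enough to absorb the deficit, neither of which is available in the paper.
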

On the other hand, obtaining the precise tail of $\P(W_\infty^{(0)}> y)$ and $\P(D_\infty^{(0)} > y)$ might be tricky. This would require a precise understanding of the best strategy for leading to a large deviation event. Naively, one could expect the best strategy to be to ensure that a single particle, a ``spine'', stays close to the origin for a sufficiently long time. However, it is not clear that this is the best strategy, and we indeed believe that there might be situations in which is it beneficial to have several of such particles, leading to a ``skeleton'' rather than a single spine. Investigating this further would be quite interesting.

Relatedly, it would be interesting to study the tail of other quantities for the truncated branching random walk, notably, the number of absorbed particles as mentioned above.

Finally, it would be interesting to study more generally the ``perpetuities'' arising in this study, see Section~\ref{sec:proof_ideas} and Lemma~\ref{lem-CondRW}. For example, for a given function $f:\R_+\to\R_+$, such that $\sum_{n=0}^\infty f(S_n^+)$ is finite almost surely, what is its tail? In particular, how fast must $f$ decay so that the tail is exponential?

\subsection{Overview of paper}

In Section~\ref{sec:additive_martingales}, we study the tail of the truncated additive martingales, providing the proof of Theorem~\ref{thm-tail-W-x}. Section~\ref{sec:derivative_martingale} is devoted to the study of the tail of the truncated derivative martingale. In Subsection~\ref{sec-notation}, we recall basic notions regarding killed random walks and spine decompositions. The proof of Theorem~\ref{thm-tail-D-x} appears in Section~\ref{sec:proof_derivative_martingale}.

\section{Truncated Additive Martingales}
\label{sec:additive_martingales} 

In this section, we prove Theorem~\ref{thm-tail-W-x} by adapting  R\"{o}sler's proof \cite[Theorem 6]{Rosler95} for branching random walks with non-negative increments. Throughout the section, we assume that the assumptions of Theorem~\ref{thm-tail-W-x} are verified. We assume furthermore that there exists $\gamma \in(1,\kappa)$ such that the random variable $\sum_{|u|=1} e ^{-\gamma V(u)}$ admits a finite exponential moment. This assumption is necessary for the proof from this section, however, as explained in Remark~\ref{rmk-spine-argu-to-W} below, an alternative way to prove Theorem~\ref{thm-tail-W-x} is to adapt the proof of Theorem~\ref{thm-tail-D-x} in the next section. Hence, Theorem~\ref{thm-tail-W-x} still holds as stated.


For each $\theta \geq 0$, we define the log-moment generating function of $W^{(x)}_{n}$ as 
\begin{equation}
   \psi_{n}( \theta,x ) : =\ln  \E \left[  e^{ \theta W_{n}^{(x)} } \right] \in (0,\infty ] \, , \ \forall \,  n \in \mathbb{N} \cup \{\infty \}.
\end{equation}
In particular $\psi_0(\theta,x)= \theta e^{-x}$ for all $\theta \geq 0$ and $x \geq 0$.

\begin{lemma}\label{lem-moments-W-x}
  Under the above assumptions, fix  an arbitrary $\rho\in(1,  \gamma \wedge 2]$. There exist constants $K,\bar{\theta}>0$ such that for every   $n\in\mathbb{N}\cup\{\infty\}$, 
  \begin{equation}\label{eq-exp-moment-addi}
   \psi_{n}(\theta, x) \leq \theta e^{-x} + K \theta^{\rho}e^{-\rho x}  \, , \, \forall \, \theta\in[0,\bar{\theta}],  x \ge 0 . 
  \end{equation} 

\end{lemma}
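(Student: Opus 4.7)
The plan is to prove the bound by induction on $n\in\mathbb{N}$, with $K$ and $\bar\theta$ chosen uniformly in $n$ at the end; the case $n=\infty$ then follows by Fatou's lemma from the almost-sure convergence $W_n^{(x)}\to W_\infty^{(x)}$. The base case $n=0$ is immediate since $\psi_0(\theta,x)=\theta e^{-x}$. For the inductive step, I would use the standard branching decomposition: conditioning on the first generation gives $W_{n+1}^{(x)}=\sum_{|v|=1,\,V(v)\ge -x}\widetilde{W}_n^{(x+V(v)),v}$ with conditionally i.i.d.\ copies $\widetilde{W}_n^{(\cdot),v}$ of $W_n^{(\cdot)}$. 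Taking exponential moments and applying the inductive hypothesis yields $\E[e^{\theta W_{n+1}^{(x)}}]\le \E[e^{Y}]$, where $Y:=\theta W_1^{(x)}+K\theta^\rho Q$ and $Q:=\sum_{|v|=1,\,V(v)\ge -x}e^{-\rho(x+V(v))}$. A key observation is that $Y\le e^{-x}\widetilde Y$, where $\widetilde Y:=\theta W_1+K\theta^\rho W_1^{(\rho)}$ is independent of $x$; moreover, the exponential-moment assumption on $W_1$ and on $\sum_{|v|=1}e^{-\gamma V(v)}$ (which transfers to $W_1^{(\rho)}$ for $\rho\le\gamma$ via the pointwise bound $e^{-\rho V}\le e^{-V}+e^{-\gamma V}$) ensures that $\widetilde Y$ has a finite exponential moment for $\theta\le\bar\theta$ small.

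Next I would compute the first-order term using $\Phi(1)=0$ and $\Phi(\rho)<0$ (the latter holding because $1<\rho<\kappa$), obtaining $\E[Y]\le \theta e^{-x}+K\theta^\rho e^{\Phi(\rho)}e^{-\rho x}$. The strict inequality $e^{\Phi(\rho)}<1$ provides a slack of size $K(1-e^{\Phi(\rho)})\theta^\rho e^{-\rho x}$, which must absorb the remainder $\E[e^Y-1-Y]$. To bound this remainder I would use $e^t-1-t\le \tfrac{1}{2}t^2 e^t$ for $t\ge 0$ and split according to $\{Y\le 1\}$ and $\{Y>1\}$. On the first event, $Y^2 e^Y\le e\,Y^\rho$ (using $Y^{2-\rho}\le 1$), and $\E[Y^\rho]\le e^{-\rho x}\E[\widetilde Y^\rho]\le C_1\theta^\rho e^{-\rho x}$. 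On the second event, $\{Y>1\}\subseteq\{\widetilde Y>e^x\}$, so $\mathbf{1}_{Y>1}\le e^{-\rho x}\widetilde Y^\rho$; combined with $Y^2 e^Y\le e^{2Y}\le e^{2\widetilde Y}$, Cauchy–Schwarz gives $\E[Y^2 e^Y\mathbf{1}_{Y>1}]\le e^{-\rho x}\E[e^{4\widetilde Y}]^{1/2}\E[\widetilde Y^{2\rho}]^{1/2}\le C_2\theta^\rho e^{-\rho x}$ for $\theta\le\bar\theta$.

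Combining these estimates and using $1+s\le e^s$, the induction closes provided $K(1-e^{\Phi(\rho)})\ge C_1+C_2$. The main technical obstacle is that the constants $C_1,C_2$ themselves depend on $K$ through the $W_1^{(\rho)}$ terms inside $\widetilde Y$, roughly of the form $C_j=A_j+B_j K^\rho \bar\theta^{\rho(\rho-1)}$ for $K$-independent constants $A_j,B_j$. This circularity is resolved by ordering the choices carefully: first fix $K$ large enough that $K(1-e^{\Phi(\rho)})$ exceeds $A_1+A_2$, then shrink $\bar\theta$ depending on $K$ so the remaining $K$-dependent contributions are negligible, using that $\rho(\rho-1)>0$. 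The subcriticality $\rho<\kappa$, equivalently $\Phi(\rho)<0$, is used decisively here to guarantee $1-e^{\Phi(\rho)}>0$, which provides the slack required to absorb the remainder; the $x$-dependence comes essentially for free from the uniform bound $Y\le e^{-x}\widetilde Y$ and the fact that $\{Y>1\}$ becomes exponentially rare as $x\to\infty$.
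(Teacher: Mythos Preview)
Your proposal is correct and follows the same R\"osler-type inductive strategy as the paper: decompose at generation $1$, apply the inductive hypothesis to each subtree, and close the induction using the slack coming from $\Phi(\rho)<0$, with the case $n=\infty$ handled by Fatou. The only difference is in how the analytic step is executed. The paper packages the post-induction quantity into a single function
\[
G_K(\lambda)=\E\Bigl[\exp\bigl(\lambda(W_1-1)+K\lambda^{\rho}(W_1(\rho)-1)\bigr)\Bigr],\qquad W_1(\rho)=\sum_{|v|=1}e^{-\rho V(v)},
\]
and shows $G_K(\lambda)\le 1$ on $[0,\bar\theta]$ by a second-derivative test: $G_K(0)=1$, $G_K'(0)=0$, and for large $K$ and small $\lambda$ the term $K\rho(\rho-1)\lambda^{\rho-2}\,\E[W_1(\rho)-1]<0$ dominates $G_K''$. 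You instead expand $\E[e^{Y}]=1+\E[Y]+\E[e^{Y}-1-Y]$, identify the slack $K(1-e^{\Phi(\rho)})\theta^{\rho}e^{-\rho x}$ directly, and control the Taylor remainder by splitting on $\{Y\le 1\}$ and $\{Y>1\}$. Both routes rest on exactly the same two inputs, $\E[W_1]=1$ and $\E[W_1(\rho)]=e^{\Phi(\rho)}<1$; the paper's version is slightly more compact, while yours makes the circular dependence between $K$ and $\bar\theta$ (and its resolution by choosing $K$ first, then shrinking $\bar\theta$) more transparent.
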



Theorem~\ref{thm-tail-W-x} follows directly from the lemma above, whose proof we defer to the end of this section.

\begin{proof}[Proof of Theorem \ref{thm-tail-W-x} assuming Lemma \ref{lem-moments-W-x}]
  Fix $\rho \in (1,\ \gamma \wedge 2]$. 
  It follows from the Markov inequality and Lemma \ref{lem-moments-W-x}   that, for any   $ y \geq 1$, the following inequality holds: 
  \begin{equation}\label{eq-exp-decay-W-x}
  \P( W^{(x)}_{n} > y)   \leq  e^{-\bar\theta y} \E [ e^{\bar\theta  W^{(x)}_{n}}]  \leq e^{ \bar{\theta} e^{-x} + K \bar{\theta}^{\rho}e^{-\rho x}} e^{ - \bar{\theta} y} \leq  C e^{     - \bar{\theta }y}
  \end{equation}
  for all $n \geq 0$ and $x \ge 0$, where we choose constant $C \ge e^{\bar\theta + K \bar\theta^{\rho}}$. Define 
  \begin{equation}
    c  :=   \frac{   \bar{\theta}  {(\kappa-\gamma)}} {2 \kappa} \in (0,\bar{\theta}).
  \end{equation} 
  Whenever $(\bar{\theta} - c) y \geq \gamma x$, it follows that $e^{-\bar{\theta} y} \leq e^{-\gamma x - c y}$.  Together with \eqref{eq-exp-decay-W-x}, we conclude that 
  \begin{equation}
    \P( W^{(x)}_{n} > y)  \le  C e^{- \gamma x} e^{-c y}
  \end{equation} 
  as desired.  

  It remains to treat the case  $1 \leq  y \leq  \frac{\gamma}{\bar{\theta}-c} x$. Let $\mathbf{I}_{n} := \inf_{|u| \leq n} V(u)$. For any $0\leq z < x$,  observe that on the event $\{  x+ \mathbf{I}_{n} \geq z \}$  we have 
  \begin{equation}
     W^{(x)}_{n}     = e^{-z}\sum_{|u|=n} e^{- ( x-z+V(u) ) } \ind{ x-z + \infV(u) \geq 0 } = e^{-z}   W^{(x-z)}_{n}  .
  \end{equation}  
  It follows from the  inequality \eqref{eq-exp-decay-W-x} that 
  \begin{equation}
     \P(  W^{(x)}_{n}  \geq y , x+ \mathbf{I}_{n} \geq z ) \leq   \P(  W^{(x-z)}_{n}  \geq e^{z} y   ) \leq C e^{-\bar{\theta} e^{z} y }.
  \end{equation}
  Moreover, by Lemma 3.1 in \cite{CDM24}, we know
  \begin{equation}
    \P(x + \mathbf{I}_n \leq z) \leq e^{-\kappa(x - z)} .
  \end{equation} 
 Combining these two inequalities and choosing $z= \ln^2(1+x)\le x$ yields
  \begin{equation}
     \P(  W^{(x)}_{n}  \geq y ) \leq  e^{-\kappa(x-\ln^{2}(1+x))} + C e^{-\bar{\theta} e^{\ln^{2}(1+x)} y }.
  \end{equation} 
for all $1 \leq   y \leq \frac{  \gamma} { \bar{\theta}-c  }x$.  
Observe that, since $c < \bar{\theta}\frac{\kappa-\gamma}{\kappa}$, we have $\gamma\left[1+\frac{c}{\bar{\theta}-c}\right] < \kappa$. Hence, for sufficiently large $x_o$ we have 
  \begin{equation}
   e^{- \gamma x - c y} \geq e^{- \gamma [ 1+ \frac{c}{\bar{\theta}-c}] x } \geq 
   e^{-\kappa(x-\ln^{2}(1+x))}  \  , \  \forall \, x \ge x_o,  1 \le y \leq \frac{  \gamma} { \bar{\theta}-c  }x . 
  \end{equation}   
  Further, for sufficiently large $  x_o$, we also have $\bar{\theta} e^{\ln^{2}(1+x)} \geq \gamma x + c $ for $x \ge x_o$  which implies that  
  \begin{equation}
   e^{- \gamma x - c y} \geq  e^{- [\gamma x +c] y} \geq  e^{-\bar{\theta} e^{\ln^{2}(1+x)} y }   \  , \  \forall \, x \ge x_o,  1 \le y \leq \frac{  \gamma} { \bar{\theta}-c  }x . 
  \end{equation}  
  Thus, for $x\geq x_o$ and $1\le y\le \frac{\gamma}{\bar{\theta}-c}x$, we conclude 
  \begin{equation}
     \P(  W^{(x)}_{n}  \geq y ) \leq (1+C) e^{- \gamma x - c y}   .
  \end{equation}   
  For the case $x \le x_o$ and $y\le \frac{\gamma}{\bar{\theta}-c}x_0$, we simply enlarge the constant $C$, thus completing the proof. 
  \end{proof}

\begin{proof}[Proof of Lemma \ref{lem-moments-W-x}]
   Assume that \eqref{eq-exp-moment-addi} holds for $n-1$. To extend it to generation $n$, we use the usual decomposition of $W^{(x)}_{n}$ at generation $1$.   For any individual $v \in \mathcal{T}$, let $\mathcal{T}^{(v)}$ denote the subtree 
   rooted at $v$, 
   and $\mathcal{T}^{(v)}_{k}$ it's $k$-th level. Define   $V^{(v)}(u):= V(u)-V(v)$ for all $u \in \mathcal{T}^{(v)}$. Then it follows that 
   \begin{align}
       W^{(x)}_{n} &= \sum_{|v|=1} \ind{V(v)+x \geq 0} \underbrace{\sum_{u \in \mathcal{T}^{(v)}_{n-1}} e^{- x+ V(v)+ V^{(v)}(u) } \ind{ x+ V(v) + \infV^{(v)}(u) \geq 0 }}_{ W^{(x+V(v),\, v)}_{n-1}} \\
       &=  \sum_{|v|=1} \ind{V(v)+x \geq 0}  W^{(x+V(v),\, v)}_{n-1} . 
   \end{align} 
   The branching property yields that conditionally on $\mathcal{F}_{1}:=\sigma( V(u) : |u|=1 )$,  the processes $\left\{ ( V^{(v)}(u) : u \in \mathcal{T}^{(v)} ) :  |v|=1 \right\} $ are independent BRWs and hence $ \{ W^{(x+V(v), v)}_{n-1} : |v|=1 \}$ are independent random variables. Consequently we obtain that 
   \begin{equation}
       e^{\psi_{n}(\theta,x)}   =   \E \left(  \prod_{|v|=1,V(v)+x \geq 0 }\E\left[  e^{\theta W^{(x+V(v), v)}_{n-1}  } \mid  \mathcal{F}_{1}\right]\right)   = \E \left(  \prod_{|v|=1,V(v)+x \geq 0 }  e^{\psi_{n-1} (\theta, x+V(v))}\right) .
   \end{equation}
   Applying the inductive bound $\psi_{n-1}(\theta,x+V(v))\le \theta e^{-(x+V(v))}+K\,\theta^\rho e^{-\rho(x+V(v))}$  gives 
   \begin{align}
       e^{\psi_{n}(\theta,x)}  & \leq  \E \left[ \exp \left( \theta e^{-x} \sum_{|v|=1} e^{-V(v)} \ind{V(v) \geq -x}  + K \theta^{\rho} e^{-\rho x}  \sum_{|v|=1} e^{-\rho V(v)} \ind{V(v) \geq -x}  \right)\right] \\
       &\leq \exp \left(  \theta e^{-x} + K \theta^{\rho}e^{-\rho x} \right) G_{K}(\theta e^{-x}),
   \end{align}
   where   the function $G_K$ is defined as 
   \begin{equation}
       G_{K}(\lambda) :=   \E \left[ \exp \left(\lambda [ \sum_{|v|=1} e^{-V(v)}  -1 ] + K \lambda^{\rho}  [\sum_{|v|=1} e^{-\rho V(v)} -1 ] \right)\right].
   \end{equation}
  By our assumption and the fact $ \sum_{|v|=1} e^{-\rho V(v)} \leq \sum_{|v|=1} e^{- \gamma V(v)} + \sum_{|v|=1} e^{-  V(v)}$, there exists  $\delta>0$ such that   $ G_{K}(\lambda)$ is well defined for $\lambda \in [0,\delta]$. Moreover the dominated convergence theorem yields that $ G_{K}(\lambda)$ is indeed infinitely differentiable on $(0,\delta)$.  

  It suffices to show that there exists positive constants $K$ and $\bar{\theta}$  such that
  \begin{equation}
    G_{K}(\lambda) \leq 1 \, \text{ for all } \, \lambda \in [0, \bar{\theta}].
  \end{equation}   
 Clearly $G_{K}(0)=1$. For brevity, set $W_1(\rho)=\sum_{|v|=1} e^{-\rho V(v)}$.  
 Taking  the derivative, we obtain that  
   \begin{equation}
       G_{K}'(\lambda)=   \E \left[ \left(  [W_{1}-1] + K \rho \lambda^{\rho-1}  [ W_{1}(\rho) -1 ]  \right) \exp \left(\lambda [W_{1} -1 ] + K \lambda^{\rho}  [ W_{1}(\rho) -1 ] \right)\right]. 
   \end{equation}
   for  $\lambda \in (0,\delta)$ and $G_{K}'(0)=0$. Differentiating once more yields 
   \begin{align}
       G_{K}''(\lambda) &=   \E \left[ \left( K \rho (\rho-1) \lambda^{\rho-2}  [ W_{1}(\rho) -1 ]  \right) \exp \left(\lambda [W_{1} -1 ] + K \lambda^{\rho}  [ W_{1}(\rho) -1 ] \right)\right] \\
      &  +     \E \left[ \left(  [W_{1}-1] + K \rho \lambda^{\rho-1}  [ W_{1}(\rho) -1 ]  \right)^2 \exp \left(\lambda [W_{1} -1 ] + K \lambda^{\rho}  [ W_{1}(\rho) -1 ] \right)\right] 
   \end{align}
 for  $\lambda \in (0,\delta)$.
Since $\rho \in (1,2]$,   the continuity of $G''(\lambda)$  implies that  for  $K$ sufficiently  large and $\lambda$ sufficiently small, say $0<\lambda < \bar{\theta}$, $G_{K}''(\lambda) \leq 0$. Hence $G_{K}(\lambda) \leq 1$ for $0 \leq \lambda \leq \bar{\theta}$   and we conclude   that \eqref{eq-exp-moment-addi} holds for any integer $n \in \mathbb{N}$. 
 Finally, applying Fatou's lemma extends the bound to $\psi_\infty$, completing the proof.
\end{proof}


\section{Truncated Derivative Martingales}
\label{sec:derivative_martingale}
 
\subsection{Definitions and preliminaries}
 \label{sec-notation}
 We first introduce some definitions and notation. In this section, we work under the assumption $\Phi '(1) = \E[ V(u) e^{-V(u)} ] =0$.

 \subsubsection*{Associated random walk.}
  Let $\{(S_{n})_{n \ge 0}, \mathbf{P}\}$ be the random walk with step distribution given by
\begin{equation}
  \mathbf{E} [ f(S_{1}) ] :=  \E \left[ \sum_{|u|=1} f(V(u)) e^{-V(u) }   \right]  \ , \ \forall f \in \mathrm{C}_{\mathrm{b}}(\mathbb{R}).
\end{equation}
In particular, $\mathbf{E}[S_1]= -\Phi'(1)=0$; and 
$\mathbf{E}[ e^{\theta S_{1}}] = e^{\Phi(1-\theta)}$, which is finite for small $|\theta|$ by hypothesis.   
For any $x\in \R$,  let $\mathbf{P}_{x}$  denote the law of the walk starting from $S_0=x$ and write $\mathbf{P}=\mathbf{P}_0$ for simplicity.

\subsubsection*{Associated renewal function.}
Let $R:[0, \infty) \rightarrow(0, \infty)$ be the renewal function of the strictly descending ladder heights of the random walk $(S_n)_{n\ge0}$, which can be expressed as follows by the duality lemma \cite[Section~XII.2]{Feller1971}:
\begin{equation*}
R(u):=\mathbf{E}  \left[ \sum_{j=0}^{\tau^{+}-1} \mathbf{1}_{\left\{S_j \geq-u\right\}}  \right]  \text{ for } u \geq 0, 
\end{equation*} 
where $\tau^{+}:=\inf \left\{n \geq 1: S_{n} \geq 0\right\}$ represents the first passage time of $[0,\infty)$. 
In particular, $R(0)=1$. Since  $\mathbf{E}[S_1^2]<\infty$,  it is well-known that (see Rogozin~\cite{RogozinDistributionFirst1964} or Doney~\cite{Doney80}) if in addition $S_1$ is non-arithmetic, then 
\begin{equation}\label{eq-renewal-thm}
  \lim_{u \to \infty}  R(u+1)-R(u) =   c_{\mathrm{ren}} \in (0,\infty) .
\end{equation}  
If $S_{1}$ is arithmetic with span $h>0$, then   
\begin{equation}
  \lim_{n \to \infty} R((n+1) h)-R(nh)  =    c_{\mathrm{ren}} \in (0,\infty) . 
\end{equation}
Moreover, the renewal function 
$R$ is harmonic for the random walk killed when entering $(-\infty,0)$ by Tanaka's theorem (see Tanaka~\cite{Tan1989} or Shi \cite[Lemma 4.7]{Zhan15}):
\begin{equation}
  \mathbf{E} \left[  R(x+S_1) \ind{x+S_1 \geq 0} \right] = R(x) \, , \, \forall \, x \ge 0.
\end{equation}

\subsubsection*{Random walk conditioned to be nonnegative.} 
Let $\{(S_{n}^+)_{n \ge 0}, \mathbf{P}_x \}$ denote the random walk $(S_{n})$ conditioned to be nonnegative in the sense of Doob's $h$-transform. Specifically, $(S_{n}^+)_{n \ge 0}$ is defined as a Markov chain with transition probabilities
\begin{equation}
  \mathbf{P} ( S^{+}_{n} \in \dif y \mid S^{+}_{n-1}= x ) = \frac{R(y)}{R(x)} \,  \mathbf{P}( S_{1}\in \dif y \mid S_{0}= x ) \text{ for all } n \ge 1, x , y \ge 0.
\end{equation}

\subsubsection*{Spine Decomposition.}  
Since $(D_n^{(x)})_{n \ge 0}$ is a nonnegative martingale with mean  $\E[D_n^{(x)}]  =R(x)e^{-x}$, by Kolmogorov's consistency theorem, there exists a probability measure $\hat{\P}_{x}$ such that  
\begin{equation}
 \dif \hat{\P}_{x}  |_{\mathcal{F}_n}  = \frac{D_n^{(x)}}{R(x)e^{-x}} \cdot \dif  \P |_{\mathcal{F}_n}  \, , \, \forall \, n \ge 0.
\end{equation}
To describe the law of the  process under the tilted measure $\hat{\P}_{x}$, one need to enlarge the probability space
and consider the BRW with a spine, defined as follows.

The system starts with one particle, denoted by $w_0$, at position $V(w_0)=0$. 
At each step $n$, particles of generation $n-1$ die, while reproducing independently as follows. Define for every $y\ge 0$ a point process $\hat{\Xi}_y$ with law given by
\begin{equation}
  \E  \left[ F ( \hat{\Xi}_{y}) \right]  =  \E \left[ F(\Xi) \int_{\mathbb{R}} \frac{R(y+z)}{R(y)} e^{- z }\ind{y+z \ge 0}  \, \Xi(\dif z )   \right] , 
\end{equation}
for every bounded measurable function $F$. The particle $w_{n-1}$ generates a point process distributed relative to its position  
as $\hat{\Xi}_{x+V\left(w_{n-1}\right)}$, whereas any particle $u$, with $|u|=n-1$ and $u \neq w_{n-1}$, generates a point process   distributed relative to its position as $\Xi$.
Then, the  particle $w_{n}$ is chosen among the children $v$ of $w_{n-1}$ with probability proportional to $R(x+V(v)) e^{-V(v)} \ind{ x+\underline{V}(v) \geq 0 }$. The line of descent $(w_n)_n \ge 0$ is referred to as the spine. We denote by $\hat{\P}^{*}_{x}$ the distribution of the BRW with a spine $\left\{  (V(u))_{ u \in \mathcal{T}}, (w_n)_{n \ge 0} \right\} $.

  \begin{proposition}[\cite{BK04}]\label{spine-decomposition}
    Fix $x \ge 0$. For every $n \ge 0$,  $ \hat{\P}^*_{x}  \big|_{ \mathcal{F}_{n}} =  \hat{\P}_{x}   \big|_{ \mathcal{F}_{n}}  $. Moreover, 
    \begin{enumerate}[(i)]
      \item For any $n$ and any vertex $u \in \mathcal{T}$ with $|u|=n$, we have 
\begin{equation}
  \hat{\P}^*_{x}  \left(   w_n=u \mid \mathcal{F}_n   \right)=\frac{R(x+V(u)) \mathrm{e}^{-[x+V(u)]} }{D_n^{(x)}} \ind{x+\underline{V}(u) \geq 0 }.
\end{equation}
 
\item The spine process $\{ (x+ V(w_n))_{n \ge 0}, \hat{\P}^{*}_{x}  \}$  has the same distribution as the random walk conditioned to be nonnegative  $\{ (S^+_{n})_{n \ge 0}, \mathbf{P}_{x}\}$. 
    \end{enumerate}
  \end{proposition}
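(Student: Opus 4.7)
The plan is to compute, step by step, the Radon--Nikodym derivative of the joint law of the truncated tree and the spine $(\mathcal{T}_n, V, w_n)$ under $\hat{\P}_x^*$ against the unbiased law $\P$ (with a spine attached for bookkeeping). At each generation $k \in \{1, \dots, n\}$, the sequential construction contributes two factors. First, the density $d\hat{\Xi}_{x+V(w_{k-1})}/d\Xi$ is, by definition of $\hat{\Xi}_y$ and a change of variable from displacements to absolute positions,
\begin{equation}
  W_k \;=\; \frac{e^{V(w_{k-1})}}{R(x+V(w_{k-1}))} \sum_{v \text{ child of } w_{k-1}} R(x+V(v))\, e^{-V(v)}\, \ind{x+V(v)\geq 0}.
\end{equation}
Second, the probability that $w_k$ equals a specific child $v$ is $R(x+V(v))e^{-V(v)}\ind{x+V(v)\ge 0}$ divided by the same sum appearing in $W_k$: the indicator $\ind{x+\infV(v)\ge 0}$ in the spine weight collapses to $\ind{x+V(v)\ge 0}$, because $x+\infV(w_{k-1})\ge 0$ holds along the spine by induction.

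The per-step product of the two factors cancels against the normalising sum, giving the clean expression $e^{V(w_{k-1})} R(x+V(w_k)) e^{-V(w_k)} \ind{x+V(w_k)\ge 0}/R(x+V(w_{k-1}))$. Multiplying from $k=1$ to $n$, the $V$'s and $R$'s telescope (using $V(w_0)=0$) and the indicators combine into $\ind{x+\infV(w_n)\ge 0}$, yielding
\begin{equation}
  \frac{d\hat{\P}^*_x}{d\P}\bigg|_{\mathcal{F}_n,\, w_n=u} \;=\; \frac{R(x+V(u))\, e^{-[x+V(u)]}\, \ind{x+\infV(u)\geq 0}}{R(x)\, e^{-x}}.
\end{equation}
Summing over vertices $u$ at generation $n$ recovers $D_n^{(x)}/(R(x)e^{-x})$, which is exactly the density defining $\hat{\P}_x$; hence the $\mathcal{F}_n$-marginal of $\hat{\P}^*_x$ coincides with $\hat{\P}_x|_{\mathcal{F}_n}$, and (i) follows by dividing the above joint density by its sum over $u$.

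For (ii), it suffices to determine the one-step transition of the spine. Given $V(w_{k-1})$, combining the biased reproduction $\hat{\Xi}_{x+V(w_{k-1})}$ with the size-biased spine selection and using the very definition of the associated random walk $(S_n)$ yields, for every bounded $f$,
\begin{equation}
  \hat{\E}^*_x\bigl[f(x+V(w_k)) \mid V(w_{k-1})\bigr] \;=\; \frac{1}{R(x+V(w_{k-1}))}\, \mathbf{E}\bigl[f(x+V(w_{k-1})+S_1)\, R(x+V(w_{k-1})+S_1)\, \ind{x+V(w_{k-1})+S_1\geq 0}\bigr],
\end{equation}
which is precisely the transition kernel of the Doob $h$-transform $(S^+_n)$ started at $x+V(w_{k-1})$. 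Together with $V(w_0)=0$, this identifies the law of $(x+V(w_n))_{n\ge 0}$ under $\hat{\P}^*_x$ with that of $(S^+_n)_{n\ge 0}$ under $\mathbf{P}_x$.

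The main subtlety is the bookkeeping of relative versus absolute positions and of the two distinct indicators $\ind{x+V(v)\ge 0}$ and $\ind{x+\infV(v)\ge 0}$; once one observes that along the spine these collapse into a single condition, the crucial cancellation between the normalising constant hidden in the $\hat{\Xi}$-density and the spine's sampling weights makes the whole telescoping essentially mechanical, with harmonicity of $R$ for the random walk killed at $(-\infty,0)$ guaranteeing that all Radon--Nikodym densities at hand have mean one.
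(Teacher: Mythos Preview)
The paper does not prove this proposition; it is quoted from \cite{BK04} without proof. Your argument is correct and is precisely the standard Lyons--Pemantle--Peres / Biggins--Kyprianou computation: at each step the density of $\hat{\Xi}_{x+V(w_{k-1})}$ with respect to $\Xi$ is exactly the normalising sum in the spine-selection rule, so the two cancel and the product telescopes to $R(x+V(u))e^{-[x+V(u)]}\ind{x+\infV(u)\ge0}/(R(x)e^{-x})$; summing over $u$ recovers $D_n^{(x)}/(R(x)e^{-x})$, and Bayes gives (i). Your derivation of (ii) is also the standard one: the same cancellation reduces the spine transition to $\mathbf{E}[f(y+S_1)R(y+S_1)\ind{y+S_1\ge0}]/R(y)$, which is by definition the kernel of $S^+$. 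The only point worth tightening is that in (ii) one should condition on the full spine history (not just $V(w_{k-1})$) to get the Markov property, but this is immediate from the sequential construction of $\hat{\P}_x^*$.
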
 

\subsection{Proof of Theorem~\ref{thm-tail-D-x}}
\label{sec:proof_derivative_martingale}
 
Similar to  the last section we define for each 
$n \in \N \cup \{ \infty \}$, 
\begin{equation}
   f_{n}(  \theta,x) := \E \left[  e^{ \theta D^{(x)}_{n}} \right] \, , \, \forall \, \theta \ge 0, x \ge 0.
\end{equation}
In particular $f_0( \theta,x)=  \theta R(x) e^{-x}$. 
Notice that if $f_{n}(\bar{\theta}  ,x)<\infty$ for some $\bar\theta>0$, then by Proposition \ref{spine-decomposition}, for all $\theta \in (0,\bar\theta)$    we have 
\begin{equation}\label{eq-f-prime-theta-x}
   f_{n}'( \theta,x) = \E \left[ D^{(x)}_{n}  e^{ \theta D^{(x)}_{n}} \right]  = R(x)e^{-x} \hat{\E}^{*}_{x}[ e^{ \theta  D^{(x)}_{n}}  ].    
\end{equation}

\begin{lemma}\label{lem-exp-mon-D-x}
Under the assumptions of Theorem~\ref{thm-tail-D-x}, there exists $\bar{\theta}>0$, such that for all  $n \in \mathbb{N} \cup \{\infty\}$ and $x\ge 0$,
\begin{equation}
\label{eq-exp-moment-2}
 \hat{\E}^{*}_{x}[ e^{\bar{\theta}  D^{(x)}_{n}}  ] \leq 2 . 
\end{equation}
  As a consequence, we have 
     \begin{equation}\label{eq-exp-moment-deri}
    \ln f_{n}(\theta, x) \leq  2 \theta R(x) e^{-x}  \text{ for any } \theta \in [0,\bar{\theta}], x \geq 0. 
     \end{equation}
  \end{lemma}

   \begin{proof}[Proof of theorem \ref{thm-tail-D-x} assuming Lemma \ref{lem-exp-mon-D-x}] Applying Markov's property, Proposition \ref{spine-decomposition} and  Lemma \ref{lem-exp-mon-D-x}, 
   for $y \geq 1$  we have 
     \begin{align}
         \P(D^{(x)}_{n} > y ) & \leq  \E[ D^{(x)}_{n}] \E \left[ \frac{D^{(x)}_{n}}{\E[ D^{(x)}_{n}]  } \ind{ D^{(x)}_{n} > y }    \right]   = R(x) e^{-x} \hat{\P}^{*}_{x}(D^{(x)}_{n} > y  ) \\
         & \leq  R(x) e^{-x} e^{-\bar{\theta} y} \hat{\E}^{*}_{x}[ e^{\bar{\theta}  D^{(x)}_{n}}  ]  \leq 2  R(x)e^{-x} e^{-\bar{\theta} y} .
     \end{align}
       This completes the proof.
    \end{proof}

 The remainder of this section is devoted to the proof of Lemma~\ref{lem-exp-mon-D-x}. Before proceeding, we state a lemma which shows that a certain functional associated with the random walk conditioned to stay nonnegative admits a finite exponential moment.

\begin{lemma}\label{lem-CondRW}
  Let $(S_{n}^+)$ denote the Doob $h$-transform of the  $(S_{n})$ and  $(Q_{n})$ be a random sequence such that for all non-negative, measurable function $f$,
  \begin{equation}\label{AssumptionConditionallLaw}
     \mathbf{E} \left[ f(  S^+_{n+1}, Q_{n+1} ) | (S^+_{j}, Q_{j}), 1 \leq j \leq n\right] =  \mathbf{E}_{x} [ f(S^{+}_1,Q_{1})]|_{x= S_{n}}.
  \end{equation} 
  If  there is $ \delta >0$ and $C_{\delta}>0$ such that 
  \begin{equation}\label{eq-as-Q-moment}
         \mathbf{E}_{x}[ e^{  \delta Q_{1} } ] \leq  e^{C_{\delta}}  \ , \ \forall \, x \geq 0 . 
  \end{equation}  
  Then there exists $ \epsilon >0$   such that 
  \begin{equation}
   \sup_{x \ge 0} \mathbf{E}_{x} \left[ \exp \left(   \epsilon  \sum_{k=0}^{\infty} R( S^{+}_{k} )e^{- S^{+} _{k} } Q_{k+1}\right) \right] \le 2. \end{equation}
  \end{lemma}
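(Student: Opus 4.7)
The plan is to follow the excursion-decomposition strategy indicated in Section~\ref{sec:proof_ideas}. Setting $g(y) := R(y)e^{-y}$, I note that $g$ is bounded on $[0,\infty)$ (since $R(y) \sim c_{\mathrm{ren}}\,y$) and decays exponentially as $y\to\infty$. A preliminary step is to deduce from the convexity of $t\mapsto e^{tQ_1}$ at fixed $Q_1$ and the hypothesis $\mathbf{E}_x[e^{\delta Q_1}]\le e^{C_\delta}$ the linearized bound
\[
\mathbf{E}_x[e^{tQ_1}] \le 1 + K_0\,t,\qquad K_0:=(e^{C_\delta}-1)/\delta,
\]
uniformly in $x\ge 0$ for $t\in[0,\delta]$. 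This is essentially the only way the exponential-moment hypothesis on $Q_1$ enters.

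Next, I would fix a large level $A>0$ and decompose the trajectory of $(S_n^+)$ into alternating visits to $[0,A]$ and excursions above $A$, via stopping times $\tau_1 = \inf\{n : S_n^+>A\}$, $\sigma_k = \inf\{n>\tau_k : S_n^+\le A\}$, $\tau_{k+1}=\inf\{n>\sigma_k : S_n^+>A\}$. Since $(S_n^+)$ is transient ($S_n^+\to\infty$ a.s.), only finitely many of the $\sigma_k$ are finite; let $N$ be their number. The key probabilistic input is a uniform geometric tail: for $A$ large there exists $p=p(A)\in(0,1)$ with $\mathbf{P}_x(N\ge j)\le (1-p)^j$ for all $x\ge 0$ and $j\ge 0$. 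This follows from showing that upon each re-entry into $[0,A]$ the conditional probability of never returning is uniformly bounded below, using drift-to-infinity of the Doob-transformed chain together with an analysis of the transition kernel $R(y)/R(x)\,\mathbf{P}(S_1\in dy)$ to guarantee (with uniform probability) a jump to a high level from any re-entry point. Granted this, write $T = \sum_{k=0}^{N} T_k$, where $T_k$ collects contributions over the $k$-th block (a visit to $[0,A]$ followed by the subsequent excursion above $A$; the final block is the terminal infinite excursion). Inside a visit, $g\le \|g\|_\infty$ and the visit length has exponential tail; during an excursion above $A$, $g\le g(A)$, which is made arbitrarily small by enlarging $A$. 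Using the linearized $Q$-bound and iterating the conditional expectation along $(S_j^+)$ inside each block yields a uniform bound $\mathbf{E}_{y}[e^{\epsilon T_k}]\le B_\epsilon$ for $\epsilon$ small and any starting point $y\ge 0$.

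Finally, by the strong Markov property the $T_k$ are conditionally independent given their starting points, so combining the per-block bound with the geometric tail of $N$ gives
\[
\mathbf{E}_x[e^{\epsilon T}] \le \sum_{j\ge 0}(1-p)^{j}B_\epsilon^{\,j+1} < \infty,
\]
provided $\epsilon$ is small enough that $(1-p)B_\epsilon<1$. The principal obstacle is producing the uniform geometric tail for $N$: since the re-entry point into $[0,A]$ may be arbitrarily close to $A$, the transience argument has to be executed with care to yield a uniform lower bound on the probability of a single jump reaching a sufficiently high level. A possible fallback is to construct a Lyapunov function $V$ with $\mathbf{E}_x[V(S_1^+)e^{\epsilon g(x)Q_1}]\le V(x)$ outside a compact set, although the excursion approach is the one best in keeping with the authors' sketch.
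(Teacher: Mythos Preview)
Your excursion strategy is in the right spirit and shares its skeleton with the paper's proof, but the single-threshold version you outline has a genuine gap: it does not control the contribution of the terminal \emph{infinite} excursion above $A$. Bounding $g\le g(A)$ on that excursion yields only $T_N\le g(A)\sum_{k\ge\tau_{N+1}}Q_{k+1}$, an infinite sum with no a~priori control, so the asserted uniform per-block bound $\mathbf{E}_y[e^{\epsilon T_k}]\le B_\epsilon$ fails precisely for the last block. Iterating your linearized $Q$-bound along the path only reduces matters to the case $Q_k\equiv 1$, which is the original problem again; the smallness of $g(A)$ buys nothing against an infinite number of steps. Your Lyapunov fallback could in principle close this, but you do not carry it out, and it is not clear what $V$ you would take.

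The paper avoids this circularity by refining the decomposition to be \emph{level-by-level} rather than at a single threshold: for every integer $j\ge 0$ it groups the visits of $(S_n^+)$ to $(j,j+1]$ into excursions that terminate upon reaching $[j+2,\infty)$. Two clean ingredients replace your uniform-escape estimate. First, each such excursion has length with a geometric tail uniformly in $j$ (from any $y\in[j,j+1]$ there is a uniform positive chance of exceeding $j+2$ within a fixed number of steps). Second, the number $\zeta(j)$ of excursions at level $j$ has a geometric tail with success probability $1-R(j+1)/R(j+2)=\Theta(1/j)$, obtained from the observation that $1/R(S_n^+)$ is a bounded supermartingale under $\mathbf{P}_x$; this gives $\mathbf{P}_x(\exists\,n:S_n^+\le j)\le R(j)/R(x)$ directly and sidesteps your worry about the re-entry point near $A$. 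One then takes exponential moments at level $j$ with a $j$-dependent parameter $\lambda_j\asymp j^{-2}$, obtaining a bound of order $j$, and a union bound over $j$ against the weight $e^{-j/2}$ coming from $R(j)e^{-j}$ yields the exponential tail. No ``terminal'' block ever appears: every step of the path is accounted for at its own level, and the exponential decay of $g$ in $j$ is traded against the deteriorating geometric parameter $\Theta(1/j)$.
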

  \vspace{3pt}

   \begin{proof}[Proof of Lemma \ref{lem-exp-mon-D-x} assuming Lemma \ref{lem-CondRW}] 
    We proceed by induction. The case $n=0$ holds because $D_0^{(x)} = R(x)e^{-x}$ is bounded. Now let $n\ge 1$ and assume that \eqref{eq-exp-moment-2} holds for all $k \leq n-1$, for some $\bar{\theta}>0$. We now prove that \eqref{eq-exp-moment-2} holds for $n$, with $\bar{\theta}$ possibly replaced by $\bar\theta \wedge c_0$, for some constant $c_0>0$ independent of $n$ and $x$.
    
    For $k\le n$, denote by $\mathcal{B}(w_{k})$ the set of siblings of the spine particle $w_k$, i.e.~, those $u\in \mathcal{T}$ such that $|u|=k$, $u\ne w_k$ and the parent of $u$ is $w_{k-1}$. Furthermore, as in the proof of Lemma~\ref{lem-moments-W-x}, denote by $\mathcal{T}^{(u)}$ the subtree of $u$. 
    Under $\hat{\P}^{*}_{x}$,  we have almost surely
     \begin{equation}
       D^{(x)}_{{n}} = \sum_{k=1}^{n} \sum_{u \in \mathcal{B}(w_{k})} \ind{ x+ \infV(u) \geq 0} D^{(x+V(u),u)}_{n-k} +  R(x+V(w_{n}))e^{- (x+V(w_{n}))} ,
     \end{equation}
     where $D^{(y,u)}_{n}:=  \sum_{v \in \mathcal{T}^{(u)}_{n}} R(y+V^{(u)}(v)) e^{-[y+ V^{(u)}(v) ]} $ for $y \ge 0$ and $n \in \mathbb{N}$. 
     The branching property yields that conditionally on $\mathcal{G}^{*}_{n}:= \sigma( V(u) , u \in \mathcal{B}(w_{k}), 1 \leq k \leq n )$,  the processes $\{ ( V^{(u)}(v) : v \in \mathcal{T}^{(u)} ) :  u \in \cup_{k=1}^{n} \mathcal{B}(w_{k}) \} $ are independent BRWs and hence 
     $ \{ D^{(x+V(u), u)}_{n-|u|} :  u \in \cup_{k=1}^{n} \mathcal{B}(w_{k}) \}$ are conditionally independent.
 Thus we can rewrite $\hat{\E}^{*}_{x} [   e^{ \theta D^{(x)}_{n}}  ] $ as 
 \begin{align}
   & \hat{\E}^{*}_{x} \left[  \exp \left(  \theta   \sum_{k=1}^{n} \sum_{u \in \mathcal{B}(w_{k})} \ind{ x+ \infV(u) \geq 0} D^{(x+V(u),u)}_{n-k} +  \theta R(x+V(w_{n}))e^{- (x+V(w_{n}))}   \right) \right]   \\
  &=    \hat{\E}^{*}_{x}  \left[   \exp\left( \theta R(x+V(w_{n}))e^{- (x+V(w_{n}))} \right) \prod_{k=1}^{n} \prod_{ \substack{ u \in \mathcal{B}(w_{k}) \\ x+ \infV(u) \geq 0 }} f_{n-k} ( \theta, x+ V(u))   \right]. 
 \end{align} 
Define 
 \begin{equation}
  A_{n}  :=  \sum_{k=1}^{n} \sum_{u \in \mathcal{B}(w_{k})} R(x+V(u))  e^{-(x+V(u))} \ind{ x+ \infV(u) \geq 0} .
 \end{equation}
 Then by applying the induction hypothesis we obtain that  
 \begin{equation}
  \hat{\E}^{*}_{x}\left[   e^{ \theta D^{(x)}_{n}} \right]  \leq \hat{\E}^{*}_{x}  \left[   e^{ \theta \left(2  A_{n} +  R(x+V(w_{n}))e^{- (x+V(w_{n}))}\right) }  \right]   .
 \end{equation}
 Let us denote $\Delta V(u):= V(u)- V(\overleftarrow{u})$ for each $u \in \mathcal{T}\backslash \{ \rho \}$ where
 $\overleftarrow{u}$ denotes the parent of $u$. Then using the fact that there exists constant $C_{\mathrm{R}}>0 $ satisfying $R(x+y) \leq C_{\mathrm{R}} (R(x)+ y_+)$ for all $x \geq 0 $ and $x+y \ge 0$, we get that 
 \begin{align}
A_{n} & = \sum_{k=0}^{n-1}  e^{- (x+V(w_{k})) }   \sum_{u \in \mathcal{B}(w_{k+1})} R(x+V(w_{k})+\Delta V(u))  e^{- \Delta V(u) } \ind{x+V(w_{k})+ \Delta V(u) \geq 0}   \\
& \leq C_{\mathrm{R}}  \sum_{k=0}^{n-1} R( x+V(w_{k})) e^{- (x+V(w_{k})) } \underbrace{  \sum_{u \in \mathcal{B}(w_{k+1})} [ 1+(\Delta V(u))_+ ] e^{- \Delta V(u) }  }_{\eqqcolon\Delta_{k+1}} .
 \end{align} 
According to Proposition \ref{spine-decomposition}, the process $\{(x+V(w_n))_{n \ge 0}, \hat{\P}_x\}$ has the same law as $\{( S^+_{n})_{n \ge 0},\mathbf{P}_x \}$. To apply Lemma~\ref{lem-CondRW}, we first note that condition \eqref{AssumptionConditionallLaw} follows directly from  the branching property of the BRW with a spine.  It remains to verify condition \eqref{eq-as-Q-moment}. Notice that $\Delta_1 \le X$, with $X$ defined in the statement of Theorem~\ref{thm-tail-D-x}. Under the assumptions of that theorem, there exists $\delta>0$ such that $\E[Xe^{\delta X}]<\infty$. Then by Proposition \ref{spine-decomposition}, for all $x\geq 0$, we have 
\begin{align}
   \hat{\E}^{*}_{x} \left[  e^{\delta \Delta_1} \right] \le \hat{\E}^{*}_{x} \left[  e^{\delta X} \right] &=  \E  \left[  \sum_{|u|=1} \frac{R(x+ V(u))}{R(x)} e^{-V(u)} \ind{V(u)+x \geq 0} e^{\delta X} \right] \\
   & \leq C_{\mathrm{R}} \E  \left[  \sum_{|u|=1} [1+ (V(u))_+]   e^{-V(u)}   e^{\delta X} \right] = C_{\mathrm{R}} \E  \left[  X e^{\delta X} \right] < \infty,  \label{eq-moment-Delta-1}
\end{align} 
as required. 
Therefore by using Lemma \ref{lem-CondRW} with $Q_{k+1} = 1+2C_{\mathrm{R}}\Delta_{k+1}$, provided that $ \bar{\theta}$ is sufficiently small (independent of $n$ and $x$)
 we get 
   \begin{equation}
    \hat{\E}^{*}_{x}\left[   e^{ \bar\theta D^{(x)}_{n}} \right]  \leq \hat{\E}^{*}_{x}  \left[    e^{ \bar\theta 2 C_{\mathrm{R}} \sum_{k=0}^{n-1}R( x+V(w_{k}) )e^{- (x+V(w_{k})) } \Delta_{k+1} + \bar\theta R( x+V(w_{n}) )e^{- (x+V(w_{n})) } }      \right] \leq 2.
   \end{equation}
   This proves \eqref{eq-exp-moment-2}.
   
Finally, \eqref{eq-f-prime-theta-x} and \eqref{eq-exp-moment-2} together imply for every $\theta\le \bar\theta$,
   \begin{equation}
  f_{n}(\theta,x)  =f_{n}(0,x) + \int_{0}^{\theta} f'_{n}(\lambda,x) \dif \lambda   \leq  1+  2 \theta R(x)e^{-x} \leq  e^{  2 \theta R(x)e^{-x}},
   \end{equation}
which proves \eqref{eq-exp-moment-deri}.
\end{proof}

Now it remains to proof Lemma \ref{lem-CondRW}. We assume for convenience that the random walk $(S_n)_{n\ge 0}$ is non-arithmetic. The argument can be readily adapted to the arithmetic case, and we omit the details.

\begin{proof}[Proof of Lemma \ref{lem-CondRW}]
  By \eqref{eq-renewal-thm}, there  exists some constant $c_{\mathrm{R}}$ such that $R(y)\leq c_{\mathrm{R}} e^{y/2}$ for all $y \geq 0$.  Thus 
  \begin{align}
     \sum_{k=0}^{\infty} R( S^{+}_{k} )e^{- S^{+} _{k} } Q_{k+1}
     &\leq c_{\mathrm{R}}  \sum_{j=0}  e^{-j/2} \sum_{k \geq 0} \ind{S_k^+ \in [j,j+1)} Q_{k+1} .
  \end{align} 
  Define $c_0 \coloneqq \left( \sum_{j=0}^{\infty} e^{-j/6}\right)^{-1}  $. 
  We obtain that for large $M \geq 0$,
    \begin{align}
        & \mathbf{P}_{x}\left( \sum_{j=0}  e^{-j/2} \sum_{k \geq 0} \ind{S_k^+ \in [j,j+1)} Q_{k+1}  >  M \right) \\
          & =  \mathbf{P} _{x}\left(\sum_{j=0}^{\infty}  c_{\mathrm{R}} e^{-j/2}  \sum_{k \geq 0} \ind{S_k^+ \in [j,j+1)} Q_{k+1}  > c_0 \sum_{j=0}^{\infty}  e^{- j/6 } M  \right) \\
      & \leq   \sum_{j=0}^{\infty} \mathbf{P}_{x} \left(   \sum_{k \geq 0} \ind{S_k^+ \in [j,j+1)} Q_{k+1} > \frac{c_{\mathrm{R}}}{c_0} e^{j/3} M  \right). \label{eq-bound-1}
    \end{align} 

  In order to decouple the dependence between $(S_k^{+})_{k\ge0}$ and $(Q_k)_{k\ge 1}$, we fix $j \in \mathbb{N}$ and decompose the trajectory into excursions from the intervals $[j,j+1)$. Each excursion begins when the chain is in the interval $[j, j+1)$ and ends once the chain first reaches $[j+2, \infty)$. Explicitly, for every $j \geq 0$ and $m \geq 1$, we define $\tau^{\mathrm{sta}}_{m}(j)$ and $\tau^{\mathrm{end}}_{m}(j)$ as the starting and ending times of the $m$-th excursion of the Markov chain $(S^{+}_n)_{n \geq 1}$, by  $\tau^{\mathrm{end}}_{0}=0$, and for $m\ge 1$,
\begin{align}
    \tau^{\mathrm{sta}}_{m}(j)  &:= \inf \left\{ k \geq \tau^{\mathrm{end}}_{m-1} : S^{+}_{k} \in [j,j+1) \right\}, \\
    \tau^{\mathrm{end}}_{m}(j)  & := \inf \left\{ k \geq   \tau^{\mathrm{sta}}_{m} +1 : S^{+}_{k} \geq  j+2 \right\},
\end{align}
where $\inf \emptyset = + \infty$.  Let
\begin{equation}
     \zeta(j) := \sum_{m=1}^{\infty} \ind{    \tau^{\mathrm{sta}}_{m}(j) < \infty  } 
\end{equation}
 denote the number of such excursions and  for each $1 \leq m \leq \zeta(j) $, define
\begin{equation}
    \mathcal{Q}_{m}(j):=  \sum_{k=   \tau^{\mathrm{sta}}_{m}(j) }^{\tau^{\mathrm{end}}_{m}(j)-1  } \ind{S_k^+ \in [j,j+1)} \, Q_{k+1}   .
\end{equation}  
Then it follows that 
\begin{equation}\label{eq-bound-27}
  \sum_{k \geq 0} \ind{S_k^+ \in [j,j+1)} Q_{k+1} =  \sum_{m=1}^{\zeta(j)}   \mathcal{Q}_{m}(j) .
\end{equation}

 Fix $m \geq 2$.  
 Applying the strong Markov property of $(S^+_{n},Q_{n})_{n \ge 0}$ and assumption \eqref{AssumptionConditionallLaw},
  and by conditioning the process at times $\tau^{\mathrm{end}}_{1}(j)$ and  $\tau^{\mathrm{sta}}_{2}(j)$, we obtain that,  for any $\lambda \geq 0$ 
\begin{align}
  E_{\eqref{eq-exp-moments-m-excursion}}(\lambda,m;j) & :=  \sup_{x \ge 0} \mathbf{E}_{x} \left[  \exp\left(  \lambda   \sum_{k=1}^{\zeta(j)}   \mathcal{Q}_{k}(j)   \right) \ind{\zeta(j) = m} \right] \label{eq-exp-moments-m-excursion} \\ 
    & \leq  \sup_{y \in [j,j+1)} \mathbf{E}_{y}\left[  \exp   \left(  \lambda  \mathcal{Q}_{1}(j)   \right)  \right] \times \sup_{x \geq j+2} \mathbf{P}_{x}( \exists n \geq 0, S^{+}_{n} < j+1 )  \\
    & \qquad \times  \sup_{y \in [j,j+1)} \mathbf{E}_{y} \left[  \exp\left(   \sum_{k=1}^{\zeta(j)} \mathcal{Q}_{k}(j) \right) \ind{\zeta(j) = m-1} \right].
\end{align}
We claim that there are constants $c_o, C_o$ such that 
for any $0 \leq \lambda \leq c_o$, and $j \ge 0$
\begin{equation}\label{eq-exp-moments-per-excursion}
    F_{\eqref{eq-exp-moments-per-excursion}}(\lambda;j)  := \sup_{y \in [j,j+1)} \mathbf{E}_{y}\left[  \exp   \left(  \lambda  \mathcal{Q}_{1}(j) \right)  \right] \leq e^{C_o \lambda} 
\end{equation}  
Then  by iteration we get that for any $0 \leq \lambda < c_o$,
\begin{equation}
    E_{\eqref{eq-exp-moments-m-excursion}}(\lambda,m;j) \leq  e^{C_o \lambda m} \left[  \sup_{x \geq j+2} \mathbf{P}_{x}( \exists n \geq 0, S^{+}_{n} < j+1 )   \right]^{m-1} . 
\end{equation}
We further note that for all $0\le y \leq x$, we have \cite[Lemma~4.1]{Caravenna2008}
\begin{equation}\label{eq-hitting-prob}
    \mathbf{P}_{x} \left( \exists n \geq 0, S^{+}_n < y \right) = 1- \frac{R(x-y)}{R(x)}. 
\end{equation}
In particular, by \eqref{eq-renewal-thm}, there exists $c>0$, such that for all  $ j \geq 0$, 
$$\sup_{x \geq j+2}  \mathbf{P}_{x} \left( \exists n \geq 0,  S_{n}^{+} < j+1 \right) \leq    1-c/(j+1). $$ Consequently,  we obtain that 
\begin{align}
    &   \sup_{x\ge0} \mathbf{E}_{x} \left[  \exp\left(  \lambda \sum_{m=1}^{\zeta(j)}  \mathcal{Q}_{m}(j) \right)   \right]
     \leq  1+  \sum_{m=1}^{\infty}   E_{\eqref{eq-exp-moments-m-excursion}}(\lambda, m;j) \\
      &  \leq 1+ e^{C_o \lambda  }  \sum_{m=1} \left( e^{C_o \lambda  }    \big(1-\frac c{j+1}\big)   \right) ^{m-1} = 1+ \frac{e^{C_o \lambda}}{1- e^{C_o \lambda  }(1-\frac{c}{j+1}) },
\end{align}
as long as $e^{C_o \lambda  }    \big(1-\frac c{j+1}\big) < 1$. Choosing  $\lambda=\lambda_{j}:= \frac{\delta}{C_o (1+j)^2}$, for $\delta>0$ sufficiently small, we have $e^{C_o \lambda_j  }    \big(1-\frac c{j+1}\big) < 1$ for all $j\ge 1$ and   $\left( 1- e^{ \frac{\delta}{(1+j)^2} }    \big(1-\frac c{j+1}\big)  \right)^{-1}=  O(j)  $ as $j \to \infty$. Together with \eqref{eq-bound-27}, it  follows that there exists a  constant $C'>0$ such that 
\begin{equation}
    \sup_{x\ge0} \mathbf{E}_x \left[  \exp\left(   \lambda_{j} \sum_{k \geq 0} \ind{S_k^+ \in [j,j+1)} Q_{k+1}   \right)   \right] \leq C'(1+ j)  \text{ for any } j \geq 0.
\end{equation} 
Substituting this inequality into the previous bound \eqref{eq-bound-1} yields that for some $C''<\infty$,
\begin{align}
    \mathbf{P}_x \left(  \sum_{k=0}^{\infty} R( S^{+}_{k} )e^{- S^{+} _{k} }Q_{k+1} > M \right) 
    \leq  \sum_{j \geq 0}^{\infty} C'(1+j) e^{  - c \lambda_{j} e^{j/3 }M } \le C''e^{- M/C''}.
\end{align}
This shows that there exists $\epsilon'>0$, such that
\[
   \sup_{x \ge 0} \mathbf{E}_{x} \left[ \exp \left(   \epsilon'  \sum_{k=0}^{\infty} R( S^{+}_{k} )e^{- S^{+} _{k} } Q_{k+1}\right) \right] \eqqcolon K <\infty.
\]
Letting $\epsilon\le \epsilon'$ such that $K^{\epsilon/\epsilon'} \le 2$, we obtain the statement of the theorem by an application of Jensen's inequality.

It remains to  prove \eqref{eq-exp-moments-per-excursion}. Let us denote $ L(j) :=  \sum_{k=   \tau^{\mathrm{sta}}_{1}(j) } ^{\tau^{\mathrm{end}}_{1}(j)-1} \ind{ S^+_k \in [j,j+1) }  $  the  local time in $[j,j+1)$  during the first excursion.   
 Furthermore, for each $k \geq 1$, let $\sigma^{(k)}_{j}$ denote the time of the $k$-th visit of $(S^+_n)$ to the interval $[j,j+1)$, prior to its first hitting of $[j+2,\infty)$.  Set $\sigma^{(k)}_{j}=\infty$ if no such visit occurs.
 According to the definition we have 
 \begin{equation}\label{eq-Q-1-j}
  \mathcal{Q}_{1}(j):=  \sum_{k=   \tau^{\mathrm{sta}}_{1}(j) }^{\tau^{\mathrm{end}}_{1}(j)-1  } \ind{S_k^+ \in [j,j+1)} Q_{k+1}  = \sum_{k=1}^{ L(j)} Q_{ \sigma^{(k)}_{j} +1 }
 \end{equation}   
Thus it follows that 
\begin{equation}
    F_{\eqref{eq-exp-moments-per-excursion}}(\lambda;j)    
    \le \sum_{m=1}^\infty \sup_{y \in [j,j+1)} \mathbf{E}_{y} \left[  \exp\left(  \lambda \sum_{k=1}^{m}  Q_{ \sigma^{(k)}_{j} +1 }    \right)   \ind{ L(j)  = m } \right] . 
\end{equation} 
By applying the Cauchy-Schwarz inequality,  we obtain 
\begin{align}
    F_{\eqref{eq-exp-moments-per-excursion}}(\lambda;j)  \leq \sum_{m=1}^\infty    I_{m}   ^{1/2} \, \times  \sup_{y \in [j,j+1)} \mathbf{P}_{y} \left( L(j)  = m   \right)^{1/2}  , 
\end{align} 
where 
\begin{equation}
  I_{m} \coloneqq  \sup_{y \in [j,j+1)} \mathbf{E}_{y} \left[  \exp \left( 2\lambda  \sum_{k=1}^{ m } Q_{ \sigma^{(k)}_{j} +1 }   \right) \ind{\sigma^{(m)}_{j} < \infty}   \right] .
\end{equation} 
Note that $(\sigma^{(k)}_{j})_{k \ge 1}$ are   stopping times with respect to the natural filtration of $(S^+_{n})$. 
Moreover, since the time index is discrete, our assumption \eqref{AssumptionConditionallLaw} automatically applies to these stopping times. By conditioning on $\{(S^+_{n},Q_{n}) : n \leq  \sigma^{(m)}_{j}\}$, it follows that 
\begin{align}
  I_{m}   
 & \leq  I_{m-1} \times  \sup_{x \in [j,j+1)}  \mathbf{E} \left[  \exp \left(  2\lambda Q_{\sigma^{(m)}_{j} +1 }   \right)\  \Big|\ \sigma^{(m)}_{j} < \infty,  S^+_{ \sigma^{(m)}_{j} }= x   \right]    \\
& =I_{m-1}\,  \sup_{x \in [j,j+1)}  \mathbf{E}_x \left[ e^{ 2\lambda Q_{1 } }     \right]  .  
\end{align}
Let $\delta$ and $C_\delta$ be as in \eqref{eq-as-Q-moment}.
Iterating the last inequality and using Jensen's inequality, we get for $\lambda \leq \delta/2$,
\begin{equation} 
 I_{m}    \leq ( \sup_{y\ge0}  \mathbf{E}_{y} \left[  e^{ \delta  Q_{1} }    \right] )^{\frac{2 \lambda}{\delta} m } \leq e^{\lambda C_{\delta}'m },
\end{equation} 
where   $C_{\delta}'= 2C_{\delta}/\delta$.

 Next  we show that $L(j)$ is stochastically dominated by a geometric random variable with success probability $c_1>0$ where $c_1$ is a constant independent of $j$. To this end, we begin by selecting two constants $\ell$ and $c_2$, both independent of $j$, such that 
\begin{equation}
    \inf_{y \in [j,j+1)} \mathbf{P}_{y} \left(  S^{+}_{\ell} > j+2 \right) \geq c_2 > 0 .
\end{equation}
Observe that the event $\{L(j)\ge n\ell\}=\{\sigma_j^{(n\ell)}<\infty\}$ implies two things. First, $\sigma_j^{(\ell)}<\infty$, so during its first $\ell$ steps the chain never exceeds level $j+2$. Second, by the strong Markov property, if we restart the chain at time $\sigma_j^{(\ell)}$, it must accumulate at least $(n-1)\ell$ visits to $[j,j+1)$ before hitting $[j+2,\infty)$.
In summation, we have 
\begin{align}
   \sup_{y \in [j,j+1)} \mathbf{P}_{y} (  L(j) > n \ell) 
   & \leq (1-c_2) \sup_{y \in [j,j+1)} \mathbf{P}_{y} ( L(j)> (n-1) \ell ) \\
   & \leq (1-c_2)^{n} . 
\end{align}
Hence there exists $C_{3}<\infty$ independent of $j$ such that $\sup_{y \in [j,j+1)} \mathbf{P}_{y} \left( L(j)  =m \right)\le C_3 e^{- c_{3}m} $.
 
Combining the previous inequalities we finally get,
\begin{equation}
    F_{\eqref{eq-exp-moments-per-excursion}}(\lambda;j) \le C_3 \sum_{m=1}  e^{ \lambda C'_{\delta} m/ 2}   e^{- c_{3}m/2} < \infty,
\end{equation} 
provided that $\lambda < \frac{c_{3}}{C_{\delta}'}$. Equation~\eqref{eq-exp-moments-per-excursion} then follows by another application of Jensen's inequality. This concludes the proof.
\end{proof}
  
\begin{remark}\label{rmk-spine-argu-to-W}
We now explain how to use the spine decomposition technique from the proof of Theorem~\ref{thm-tail-D-x} in order to give an alternative proof of Theorem~\ref{thm-tail-W-x}, valid under the assumptions stated in Theorem~\ref{thm-tail-W-x}. We aim to use induction to demonstrate that, for all $\theta \le \bar{\theta}$ and $x \ge 0$, the following bound holds: 
\begin{equation} 
  \psi_{n}'( \theta,x) = \E \left[ W^{(x)}_{n}  e^{ \theta W^{(x)}_{n}} \right]  = e^{-x} \hat{\E}^{*} [ e^{\theta  W^{(x)}_{n} } ] \leq 2e^{-x},   
\end{equation} 
where $\hat{\P}^{*}$ is the law of BRW with a spine defined via the change of measure $ \dif \hat{\P}^{*} |_{\mathcal{F}_{n}} = W_{n}  \dif \P |_{\mathcal{F}_{n}}$.

As in the proof  of Lemma \ref{lem-exp-mon-D-x}, define  $B^{(x)}_{n}:= \sum_{k=0}^{n} e^{- [x+V(w_{k})]} \ind{x+\infV(w_k) \ge 0} Q_{k+1}$ 
where $Q_{k+1}:=   \sum_{u \in \mathcal{B}(w_{k+1})} e^{- \Delta V(u) }$. 
It suffices to show that $\sup_{x}\sup_{n}\E^{*}[e^{\theta B^{(x)}_{n} }]<\infty $. The process  $\{ ( V(w_{n}) )_{n}, \hat{\P}^{*} \} $ has the same law as $\{(S_{n})_{n \ge 0}, \mathbf{P}_x\} $ (see e.g. \cite{Zhan15}) with $\mathbf{E}[S_{n}]=-\Phi'(1)>0$.  Therefore it suffices to verify an analogous result to Lemma~\ref{lem-CondRW}: for $\epsilon>0$ sufficiently small, we have
\begin{equation}
  \sup_{x \ge 0} \mathbf{E}_{x} \left[ \exp \left(   \epsilon  \sum_{k=0}^{\infty} e^{- S_{k} } \ind{S_{k} \ge 0}  Q_{k+1}\right) \right]  \le 2.
\end{equation}
This is simpler than Lemma \ref{lem-CondRW}, due to the positive drift of the random walk $(S_{n})_{n \ge 0}$: Equation \eqref{eq-hitting-prob} can be simply replaced by $
\mathbf{P}_{x}(\exists n \geq 0: S_n<x) <1$ and the rest of the proof remains identical. 
\end{remark}


\section*{Acknowledgement}

This material is based upon work supported by the National Science
Foundation under Grant No. DMS-1928930, while the authors were in
residence at the Simons Laufer Mathematical Sciences Institute in
Berkeley, California, during the Spring 2025 semester.
Heng Ma further acknowledges partial support from grants National Key R \& D program of China (No. 2023YFA1010103) and NSFC Key Program (Project No. 12231002). 
Pascal Maillard further acknowledges partial support from Institut Universitaire de France, the MITI interdisciplinary program 80PRIME GEx-MBB and the ANR MBAP-P (ANR-24-CE40-1833) project.

 
 \bibliographystyle{alpha}
 \bibliography{biblio}

\begin{thebibliography}{BBHM17}

\bibitem[AIR09]{AIR09}
Gerold Alsmeyer, Alex Iksanov, and Uwe R{\"o}sler.
\newblock On distributional properties of perpetuities.
\newblock {\em J. Theor. Probab.}, 22(3):666--682, 2009.

\bibitem[BBHM17]{BerestyckiBrunetHarrisMilosBranchingBrownian2017}
Julien Berestycki, {\'E}ric Brunet, Simon~C. Harris, and Piotr Mi{\l}o{\'s}.
\newblock Branching {{Brownian}} motion with absorption and the all-time minimum of branching {{Brownian}} motion with drift.
\newblock {\em Journal of Functional Analysis}, 273(6):2107--2143, September 2017.

\bibitem[Big77]{Biggins77}
J.~D. Biggins.
\newblock Martingale convergence in the branching random walk.
\newblock {\em J. Appl. Probab.}, 14:25--37, 1977.

\bibitem[Big98]{Biggins98}
J.D. Biggins.
\newblock Lindley-type equations in the branching random walk.
\newblock {\em Stochastic Processes and their Applications}, 75(1):105--133, 1998.

\bibitem[BIM21]{BIM21}
Dariusz Buraczewski, Alexander Iksanov, and Bastien Mallein.
\newblock On the derivative martingale in a branching random walk.
\newblock {\em Ann. Probab.}, 49(3):1164--1204, 2021.

\bibitem[BK04]{BK04}
J.~D. Biggins and A.~E. Kyprianou.
\newblock Measure change in multitype branching.
\newblock {\em Adv. Appl. Probab.}, 36(2):544--581, 2004.

\bibitem[BK05]{BK05}
J.~D. Biggins and A.~E. Kyprianou.
\newblock Fixed points of the smoothing transform: the boundary case.
\newblock {\em Electron. J. Probab.}, 10:609--631, 2005.
\newblock Id/No 17.

\bibitem[Bur09]{Buraczewski09}
Dariusz Buraczewski.
\newblock On tails of fixed points of the smoothing transform in the boundary case.
\newblock {\em Stochastic Processes Appl.}, 119(11):3955--3961, 2009.

\bibitem[CC08]{Caravenna2008}
Francesco Caravenna and Lo{\"i}c Chaumont.
\newblock Invariance principles for random walks conditioned to stay positive.
\newblock {\em Annales de l'institut Henri Poincare (B) Probability and Statistics}, 44(1):170--190, 2008.

\bibitem[CdRM24]{CDM24}
Xinxin Chen, Loïc de~Raphélis, and Heng Ma.
\newblock Branching random walk conditioned on large martingale limit, 2024.
\newblock arXiv:2408.05538.

\bibitem[Che15]{Chen15}
Xinxin Chen.
\newblock {A necessary and sufficient condition for the nontrivial limit of the derivative martingale in a branching random walk}.
\newblock {\em Advances in Applied Probability}, 47(3):741 -- 760, 2015.

\bibitem[Don80]{Doney80}
R.~A. Doney.
\newblock Moments of ladder heights in random walks.
\newblock {\em Journal of Applied Probability}, 17(1):248–252, 1980.

\bibitem[DSS23]{DalySchulteShneerFirstPassage2023}
Fraser Daly, Matthias Schulte, and Seva Shneer.
\newblock First passage percolation on {{Erd{\H o}s-R{\'e}nyi}} graphs with general weights, August 2023.

\bibitem[Fel71]{Feller1971}
William Feller.
\newblock {\em An Introduction to Probability Theory and Its Applications. {{Vol II}}.}
\newblock John Wiley {\textbackslash}\& Sons Inc., New York, 1971.

\bibitem[Gui90]{Guivarc'h90}
Yves Guivarc'h.
\newblock Sur une extension de la notion de loi semi-stable. ({On} an extension of the notion of semi-stable law).
\newblock {\em Ann. Inst. Henri Poincar{\'e}, Probab. Stat.}, 26(2):261--285, 1990.

\bibitem[Iks16]{IksanovRenewalTheory2016}
Alexander Iksanov.
\newblock {\em Renewal {{Theory}} for {{Perturbed Random Walks}} and {{Similar Processes}}}.
\newblock Probability and {{Its Applications}}. Springer International Publishing, Cham, 2016.

\bibitem[Liu00]{Liu00}
Quansheng Liu.
\newblock On generalized multiplicative cascades.
\newblock {\em Stochastic Processes and their Applications}, 86(2):263--286, 2000.

\bibitem[Mad16]{Madaule16b}
Thomas Madaule.
\newblock The tail distribution of the {Derivative} martingale and the global minimum of the branching random walk.
\newblock Preprint, {arXiv}:1606.03211 [math.{PR}] (2016), 2016.

\bibitem[Rog64]{RogozinDistributionFirst1964}
B.~A. Rogozin.
\newblock On the {{Distribution}} of the {{First Jump}}.
\newblock {\em Theory of Probability \& Its Applications}, 9(3):450--465, January 1964.

\bibitem[Rö92]{Rosler95}
Uwe Rösler.
\newblock A fixed point theorem for distributions.
\newblock {\em Stochastic Processes and their Applications}, 42(2):195--214, 1992.

\bibitem[Shi15]{Zhan15}
Zhan Shi.
\newblock {\em Branching random walks. {\'E}cole d'{\'E}t{\'e} de {Probabilit{\'e}s} de {Saint}-{Flour} {XLII} -- 2012}, volume 2151 of {\em Lect. Notes Math.}
\newblock Cham: Springer, 2015.

\bibitem[Tan89]{Tan1989}
Hiroshi Tanaka.
\newblock Time reversal of random walks in one dimension.
\newblock {\em Tokyo J. Math.}, 12(1):159--174, 1989.

\end{thebibliography}

 \end{document}